\numberwithin{equation}{section}
\theoremstyle{plain}
\newtheorem{theorem}{Theorem}[section]
\newtheorem{corollary}[theorem]{Corollary}
\newtheorem{lemma}[theorem]{Lemma}
\theoremstyle{definition}
\newtheorem{assumption}{Assumption}
\theoremstyle{remark}
\newtheorem{remark}{Remark}
\newcommand{\E}{\mathsf{E}}
\newcommand{\V}{\mathsf{V}}
\definecolor{eGreen}{rgb}{.057, .549,.065}
\begin{document}
\title{Estimators for multivariate allometric regression model}

\setcounter{footnote}{1}
\author{Koji Tsukuda\footnote{Faculty of Mathematics, Kyushu University, 744 Motooka, Nishi-ku, Fukuoka-shi, Fukuoka 819-0395, Japan.} and Shun Matsuura\footnote{Faculty of Science and Technology, Keio University, 3-14-1 Hiyoshi, Kohoku-ku, Yokohama, Kanagawa 223-8522, Japan.}}
\maketitle

\noindent
{\bf Abstract}\\
In a regression model with multiple response variables and multiple explanatory variables, 
if the difference of the mean vectors of the response variables for different values of explanatory variables is always in the direction of the first principal eigenvector of the covariance matrix of the response variables, then it is called a multivariate allometric regression model. 
This paper studies the estimation of the first principal eigenvector in the multivariate allometric regression model. 
A class of estimators that includes conventional estimators is proposed based on weighted sum-of-squares matrices of regression sum-of-squares matrix and residual sum-of-squares matrix. 
We establish an upper bound of the mean squared error of the estimators contained in this class, and the weight value minimizing the upper bound is derived. 
Sufficient conditions for the consistency of the estimators are discussed in weak identifiability regimes under which the difference of the largest and second largest eigenvalues of the covariance matrix decays asymptotically and in ``large $p$, large $n$" regimes, where $p$ is the number of response variables and $n$ is the sample size. 
Several numerical results are also presented.
\vspace{3mm}\\
{\bf Keywords}: 
covariance matrix, 
multivariate regression model, 
principal component analysis,
sum-of-squares matrix,
weak identifiability.
\vspace{3mm}\\
{\bf Mathematics Subject Classification}: 62J05, 62H12, 62H25

\allowdisplaybreaks[4]

\section{Introduction}\label{sec:1}

The multivariate regression model, a regression model with multiple response variables and multiple explanatory variables, is one of the most popular models in multivariate analysis; see \cite{RefA03}, \cite{RefJW07}, \cite{RefRC12}, and so on.
For the model, the method of least squares works well.
When additional assumptions such as low rankness of the regression coefficient matrix exist, there is another estimator that is generally better than the ordinary least squares estimator.
In particular, we consider the multivariate allometric regression model, proposed in \cite{RefTI06}, which is a reduced rank regression model of rank one with the constraint that the differences of the mean vectors of response variables for different values of explanatory variables are in the direction of the first principal eigenvector of the covariance matrix.
The first principal eigenvector is a key quantity to infer the multivariate allometric regression model, so we consider the estimation of this quantity.
Specifically, we define a class of estimators that includes natural conventional estimators, evaluate the mean squared errors of the estimators contained in this class, compare them from the viewpoint of the consistency, propose a new estimator, and examine the proposed estimator through numerical simulations.

Let
\begin{equation}\label{MRA}
\boldsymbol{Y} = \boldsymbol{1}_n\boldsymbol{\mu}' + \boldsymbol{X}\boldsymbol{B} + \boldsymbol{E} 
\ \mbox{with} \ \E[{\rm vec}(\boldsymbol{E})]=\boldsymbol{0}_{np}, \ \V [{\rm vec}(\boldsymbol{E})]=\boldsymbol{\Sigma} \otimes \boldsymbol{I}_n
\end{equation}
be a multivariate regression model, where $\boldsymbol{Y}$ is an $n\times p$ matrix of response variables ($n$ is the sample size and $p$ is the number of response variables), $\boldsymbol{1}_n$ is the $n\times 1$ vector whose all elements are $1$, $\boldsymbol{\mu}$ is a $p\times 1$ vector of intercepts, $\boldsymbol{X}$ is an $n \times q$ matrix of explanatory variables ($q$ is the number of explanatory variables) such that each explanatory variable is 
standardized to have mean $0$ (i.e., $\boldsymbol{X}'\boldsymbol{1}_n = \boldsymbol{0}_q$), $\boldsymbol{B}$ is a $q\times p$ matrix of regression coefficients, $\boldsymbol{E}$ is an $n \times p$ matrix of errors with mean $0$, $\boldsymbol{\Sigma}$ is a $p \times p$ positive-definite matrix denoting the common covariance matrix of errors of the $p$ response variables, $\boldsymbol{0}_{np}$ is the $np \times 1$ zero vector, $\boldsymbol{I}_n$ is the $n\times n$ identity matrix, and $\otimes$ denotes the Kronecker product. 
Putting $\boldsymbol{Y}=\left( \boldsymbol{y}_1,\boldsymbol{y}_2,\dots,\boldsymbol{y}_n \right)'$, $\boldsymbol{X}=\left( \boldsymbol{x}_1,\boldsymbol{x}_2,\dots,\boldsymbol{x}_n \right)'$, and $\boldsymbol{E}=\left( \boldsymbol{e}_1,\boldsymbol{e}_2,\dots,\boldsymbol{e}_n \right)'$, we can rewrite \eqref{MRA} as
\[ \boldsymbol{y}_i = \boldsymbol{\mu} + \boldsymbol{B}'\boldsymbol{x}_i + \boldsymbol{e}_i \ \mbox{with} \ \E[\boldsymbol{e}_i]=\boldsymbol{0}_{p}, \ \V [\boldsymbol{e}_i]=\boldsymbol{\Sigma}, \  i=1,2,\dots,n. \]
Each response vector $\boldsymbol{y}_i$ has the mean vector $\boldsymbol{\mu} + \boldsymbol{B}'\boldsymbol{x}_i$ and the covariance matrix $\boldsymbol{\Sigma}$. 
The multivariate allometric regression model is a special case of the multivariate regression model.
Let $\lambda_1,\lambda_2,\dots,\lambda_p$ denote the ordered eigenvalues of $\boldsymbol{\Sigma}$ 
($\lambda_1 \ge \lambda_2 \ge \cdots \ge \lambda_p > 0$), 
and let $\boldsymbol{\gamma}_i$ be a normalized principal eigenvector of $\boldsymbol{\Sigma}$ corresponding to its eigenvalue $\lambda_i$ for $i = 1,\ldots,p$. 
Suppose that $\lambda_1 > \lambda_2$ holds, which implies that $\boldsymbol{\gamma}_1$ is unique up to a sign change. 
Then, the multivariate allometric regression model is expressed as
\begin{align}
& \boldsymbol{Y} = \boldsymbol{1}_n\boldsymbol{\mu}' + \boldsymbol{X}\boldsymbol{\alpha}\boldsymbol{\gamma}_1' + \boldsymbol{E} \nonumber \\
& \ \mbox{with} \ \E[{\rm vec}(\boldsymbol{E})]=\boldsymbol{0}_{np}, \ \V [{\rm vec}(\boldsymbol{E})]= \boldsymbol{\Sigma}\otimes \boldsymbol{I}_n 
=\boldsymbol{\Gamma}\boldsymbol{\Lambda}\boldsymbol{\Gamma}' \otimes \boldsymbol{I}_n,
\label{MAR}
\end{align}
where $\boldsymbol{\alpha}$ is a $q \times 1$ vector, $\boldsymbol{\Gamma}=\left( \boldsymbol{\gamma}_1,\boldsymbol{\gamma}_2,\dots,\boldsymbol{\gamma}_p \right)$ is a $p\times p$ orthogonal matrix, and $\boldsymbol{\Lambda}={\rm diag}\left( \lambda_1,\lambda_2,\dots,\lambda_p \right)$ is a $p\times p$ diagonal matrix. 
We assume the multivariate normality of the distribution of the error matrix $\boldsymbol{E}$:
${\rm vec}(\boldsymbol{E}) \sim \mathcal{N}_{np} \left( \boldsymbol{0}_{np}, \boldsymbol{\Gamma}\boldsymbol{\Lambda}\boldsymbol{\Gamma}' \otimes \boldsymbol{I}_n \right)$.
In this setup, we consider the estimation of $\boldsymbol{\gamma}_1$ in \eqref{MAR}.

The multivariate allometric regression model was proposed as an extension of the allometric extension model \citep{RefF97,RefH06} to the context of multivariate regression in \cite{RefTI06}.
The allometric extension model means that several population distributions share a first principal eigenvector of their covariance matrices and the differences of their mean vectors are in the direction of the first principal eigenvector, which has been applied to various areas including biometrics \citep{RefKF91,RefF97}.
\cite{RefBFN99} proposed a hypothesis testing procedure for the allometric extension model based on a $\chi^2$ statistic in large sample situations. 
\cite{RefKHF08} derived conditions for multivariate conditional distributions to form the allometric extension model. 
\cite{RefMK14} discussed principal points of multivariate mixture distributions of the allometric extension model.
\cite{RefTM23} proposed hypothesis testing procedures for the allometric extension model in high-dimensional and strongly spiked eigenvalue situations.
\cite{RefKGT23} evaluated the performance of spectral clustering algorithm when it is used for a random sample from the mixture of two subpopulations that are allometric extensions of each other.

The multivariate allometric regression model can be viewed as a partial generalization of the allometric extension model, because the multivariate allometric regression model \eqref{MAR} can deal with infinite mixture of population distributions incorporating multivariate regression models with explanatory variables, while there is an restriction that each population distribution has a common covariance matrix of errors. 
Hypothesis testing for the multivariate allometric regression model has been discussed in \cite{RefS17}.
The multivariate allometric regression model is also closely related to multivariate reduced rank regression \citep{RefA51,RefA03,RefI75,RefRVC22}. 
If the differences of the mean vectors of the response variables for different explanatory variables 
are in a common direction but do not necessarily coincide with the first principal eigenvector $\boldsymbol{\gamma}_1$ of $\boldsymbol{\Sigma}$, then \eqref{MAR} reduces to the multivariate reduced rank regression of rank one. 
However, in \eqref{MAR}, using the information that the common direction of the differences of the mean vectors coincides with the first principal eigenvector may enable more accurate estimation compared with the multivariate reduced rank regression of rank one. 

This paper studies the estimation of $\boldsymbol{\gamma}_1$ in \eqref{MAR}. 
From the viewpoint of principal component analysis, inferring the first principal eigenvector $\boldsymbol{\gamma}_1$ is of special interest.
Moreover, although we focus on the estimation of $\boldsymbol{\gamma}_1$, precise estimation of $\boldsymbol{\gamma}_1$ may lead to precise estimation of $\boldsymbol{B} = \boldsymbol{\alpha}\boldsymbol{\gamma}_1' = \boldsymbol{\alpha}\boldsymbol{\gamma}_1' \boldsymbol{\gamma}_1 \boldsymbol{\gamma}_1'$. 
In fact, once an estimator $\hat{\boldsymbol{\gamma}}_1$ of $\boldsymbol{\gamma}_1$ is obtained, an estimator of $\boldsymbol{B}$ is obtained as $\hat{\boldsymbol{B}}_{OLS} \hat{\boldsymbol{\gamma}}_1 \hat{\boldsymbol{\gamma}}_1'$ if $\boldsymbol{X}' \boldsymbol{X}$ is invertible, where $\hat{\boldsymbol{B}}_{OLS}= \left( \boldsymbol{X}'\boldsymbol{X} \right)^{-1}\boldsymbol{X}'\boldsymbol{Y}$ is the ordinary least squares estimator. 
In the reduced rank regression, such estimators are generally better than $\hat{\boldsymbol{B}}_{OLS}$; see \cite{RefA99}.
Also when \eqref{MAR} is supposed, it seems natural to consider that $\hat{\boldsymbol{B}}_{OLS} \hat{\boldsymbol{\gamma}}_1 \hat{\boldsymbol{\gamma}}_1'$ is better than $\hat{\boldsymbol{B}}_{OLS}$ in general.
This is another motivation to focus on estimating $\boldsymbol{\gamma}_1$.
In particular, the consistency up to sign of $\hat{\boldsymbol{\gamma}}_1$ is important.
That is because, e.g., under some adequate asymptotic regimes, if $\hat{\boldsymbol{\gamma}}_1$ is consistent, then $\hat{\boldsymbol{B}}_{OLS} \hat{\boldsymbol{\gamma}}_1 \hat{\boldsymbol{\gamma}}_1'$ is also consistent.

In this paper, we deal with a class of estimators of $\boldsymbol{\gamma}_1$ based on weighted matrices of two independent sum-of-squares matrices: regression sum-of-squares matrix and residual sum-of-squares matrix. 
Non-asymptotic upper bounds are established for the mean squared errors of the estimators, 
and the weight value minimizing the upper bound for the mean squared error is derived. 
Subsequently, we discuss the consistency of the estimators under 
weak identifiability regimes under which $\lambda_1-\lambda_2 \to 0$ as $n \to \infty$, and 
``large $p$, large $n$'' regimes under which $p$ and $n$ tend to $\infty$ simultaneously.
As remarked in \cite{RefPRV20}, properties of principal component analysis under weak identifiability regimes have not been sufficiently investigated.
Under large $p$, large $n$ regimes that include so-called high-dimensional asymptotic regimes $n,p\to\infty$ with $p/n \to \infty$, we consider weak identifiability regimes and a sort of spiked eigenvalue regimes simultaneously.
In the context of multivariate regression, large $p$, large $n$ regimes have been discussed in a lot of works such as \cite{RefBW19}, \cite{RefBSW11}, \cite{RefCDC13}, \cite{RefG11}, \cite{RefSC17}, \cite{RefYWF15}, \cite{RefYELM07}, and so on.
Furthermore, numerical results are presented to compare the proposed estimator with some conventional estimators. 
Finally, a illustrative real data example is provided.

\section{Conventional estimators}\label{sec:2}

This section presents some conventional estimators of $\boldsymbol{\gamma}_1$ in the multivariate allometric regression model \eqref{MAR}. 
Here and subsequently, assume that $n > 1+q$ and $\mathrm{rank}(\boldsymbol{X}) = q$.
Note that $n>2+q$ will be assumed in Section 6 in order to avoid zero in a denominator.
Let
\begin{align*}
& \boldsymbol{S}_R = \boldsymbol{Y}'\boldsymbol{X}\left( \boldsymbol{X}'\boldsymbol{X} \right)^{-1}\boldsymbol{X}'\boldsymbol{Y}, \\
& \boldsymbol{S}_E = \boldsymbol{Y}' \left\{ \boldsymbol{I}_n - \frac{1}{n}\boldsymbol{1}_n\boldsymbol{1}_n' - \boldsymbol{X}\left( \boldsymbol{X}'\boldsymbol{X} \right)^{-1}\boldsymbol{X}' \right\} \boldsymbol{Y}, \\
& \boldsymbol{S}_T = \boldsymbol{Y}' \left( \boldsymbol{I}_n - \frac{1}{n}\boldsymbol{1}_n\boldsymbol{1}_n' \right) \boldsymbol{Y}.
\end{align*}
The matrices $\boldsymbol{S}_R$, $\boldsymbol{S}_E$, and $\boldsymbol{S}_T$ are so-called regression sum-of-squares, residual sum-of-squares, and total sum-of-squares matrices, respectively.
It is obvious that $\boldsymbol{S}_T=\boldsymbol{S}_R+\boldsymbol{S}_E$.
Straightforward calculations give
\begin{align*}
&\E \left[ \boldsymbol{S}_R \right] = 
q\boldsymbol{\Sigma} + \boldsymbol{\gamma}_1\boldsymbol{\alpha}'\boldsymbol{X}'\boldsymbol{X}\boldsymbol{\alpha}\boldsymbol{\gamma}_1' 
= q\boldsymbol{\Gamma}\boldsymbol{\Lambda}\boldsymbol{\Gamma}' 
+ \| \boldsymbol{X}\boldsymbol{\alpha} \|^2 \boldsymbol{\gamma}_1\boldsymbol{\gamma}_1', \\
&\E \left[ \boldsymbol{S}_E \right] = (n-1-q)\boldsymbol{\Sigma} = (n-1-q)\boldsymbol{\Gamma}\boldsymbol{\Lambda}\boldsymbol{\Gamma}', \\
&\E \left[ \boldsymbol{S}_T \right] 
= (n-1)\boldsymbol{\Sigma} + \boldsymbol{\gamma}_1\boldsymbol{\alpha}'\boldsymbol{X}'\boldsymbol{X}\boldsymbol{\alpha}\boldsymbol{\gamma}_1' 
= (n-1)\boldsymbol{\Gamma}\boldsymbol{\Lambda}\boldsymbol{\Gamma}' 
+ \| \boldsymbol{X}\boldsymbol{\alpha} \|^2 \boldsymbol{\gamma}_1\boldsymbol{\gamma}_1'. 
\end{align*}

Let $\hat{\boldsymbol{\gamma}}_1^{(R)}$, $\hat{\boldsymbol{\gamma}}_1^{(E)}$, and $\hat{\boldsymbol{\gamma}}_1^{(T)}$ be the first principal eigenvectors of $\boldsymbol{S}_R$, $\boldsymbol{S}_E$, and $\boldsymbol{S}_T$, respectively, 
which estimate $\boldsymbol{\gamma}_1$. 
The estimator $\hat{\boldsymbol{\gamma}}_1^{(R)}$ is a typical estimator to estimate the regression coefficient matrix under reduced rank regression of rank one; see \cite{RefRVC22}.
However, in the multivariate allometric regression model \eqref{MAR}, $\hat{\boldsymbol{\gamma}}_1^{(R)}$ does not sufficiently use the information that the first principal eigenvector of $\boldsymbol{\Sigma}$ is $\boldsymbol{\gamma}_1$. 
The estimator $\hat{\boldsymbol{\gamma}}_1^{(E)}$ only uses this information, not using the information that $\boldsymbol{B}$ is decomposed as $\boldsymbol{B}=\boldsymbol{\alpha}\boldsymbol{\gamma}_1'$. 
The estimator $\hat{\boldsymbol{\gamma}}_1^{(T)}$ has been presented in Tarpey and Ivey (2006), which uses both the informations. 

However, it is natural to consider that $\hat{\boldsymbol{\gamma}}^{(T)}$ is not always the best estimator. 
To be more specific, for example, we consider the accuracy of the first principal eigenvector $\hat{\boldsymbol{\gamma}}^{(E)}_1$ of $\boldsymbol{S}_E$. 
As $n\to\infty$ with $q$ held fixed, under the assumption that $n^{-1} \boldsymbol{X}' \boldsymbol{X}$ converges to a positive-definite matrix, $\sqrt{n} (\hat{\boldsymbol{\gamma}}^{(E)}_1 - \boldsymbol{\gamma}_1)$ converges in distribution to
\[
\mathcal{N}_p \left( \boldsymbol{0}_p, \lambda_1 \sum_{i=2}^p \frac{\lambda_i}{(\lambda_1 - \lambda_i)^2} \boldsymbol{\gamma}_i \boldsymbol{\gamma}_i' \right) ;
\]
see, e.g., \cite{RefKN93}. 
This suggests that if $\lambda_1 - \lambda_2$ is small, then $\hat{\boldsymbol{\gamma}}^{(E)}_1$ is not a good estimator. 
Hence, we consider that this property may be inherited by $\hat{\boldsymbol{\gamma}}^{(T)}_1$, since $\boldsymbol{S}_T = \boldsymbol{S}_R + \boldsymbol{S}_E$ includes $\boldsymbol{S}_E$. 
This estimate will be numerically examined in Section~\ref{sec:7}.

\begin{remark}
As stated above, $\mathrm{rank}(\boldsymbol{X}) = q$ is assumed throughout the paper.
When $\mathrm{rank}(\boldsymbol{X}) < q$, we may extend analysis procedures studied in this paper by replacing $(\boldsymbol{X}' \boldsymbol{X})^{-1}$ with the Moore--Penrose inverse of $\boldsymbol{X}' \boldsymbol{X}$ or removing redundant explanatory variables with modifying the corresponding parts such as degrees of freedom.
\end{remark}

\begin{remark}\label{remloc}
Note that $\hat{\boldsymbol{\gamma}}_1^{(T)}$ is the first principal eigenvector in the usual principal component analysis for the multivariate location-scale model with finite second moment that is given by $\boldsymbol{\alpha} = \boldsymbol{0}_q$ in \eqref{MAR}.
\end{remark}

\begin{remark}
\cite{RefPRV20} studied consistency and asymptotic normality of sample first principal eigenvector under weak identifiability regimes with a specific form of the covariance matrix.
Moreover, considering elliptical distributions, \cite{RefPV23} studied asymptotic properties of the leading eigenvector of the Tyler shape estimator.
\end{remark}

\begin{remark}
In our setup, $\hat{\boldsymbol{\gamma}}_1^{(E)}$ is the first principal eigenvector of a Wishart matrix with $n - 1 - q$ degrees of freedom and scale matrix parameter $\boldsymbol{\Sigma}$.
Thus, if $q$ is fixed, properties of $\hat{\boldsymbol{\gamma}}_1^{(E)}$ can be interpreted as general properties for the Wishart distribution.
\end{remark}

\section{Estimators based on weighted matrices of $\boldsymbol{S}_R$ and $\boldsymbol{S}_E$}\label{sec:3}

Let us introduce a class of estimators of $\boldsymbol{\gamma}_1$ based on weighted matrices of $\boldsymbol{S}_R$ and $\boldsymbol{S}_E$, 
and discuss some properties of the estimators such as their mean squared errors and consistency. 
Let $\hat{\boldsymbol{\gamma}}_1(w)$ be 
the first principal eigenvector of a weighted matrix of $\boldsymbol{S}_R$ and $\boldsymbol{S}_E$: 
\[ \boldsymbol{S}(w) = (1-w)\boldsymbol{S}_R + w\boldsymbol{S}_E \]
with $0\le w \le 1$. 
Obviously, $\hat{\boldsymbol{\gamma}}_1(0)$, $\hat{\boldsymbol{\gamma}}_1(0.5)$, and $\hat{\boldsymbol{\gamma}}_1(1)$ 
correspond to $\hat{\boldsymbol{\gamma}}_1^{(R)}$, $\hat{\boldsymbol{\gamma}}_1^{(T)}$, and $\hat{\boldsymbol{\gamma}}_1^{(E)}$, respectively. 
In this section, we establish an upper bound for the mean squared error of the estimator $\hat{\boldsymbol{\gamma}}_1(w)$. 

As a preparation, we derive a simple form of the expectation of the squared Frobenius norm of $\boldsymbol{S}(w) - \E[\boldsymbol{S}(w)]$.

\begin{lemma}\label{Lem1}
It holds that
\begin{align*}
&\E \left[ \left\| \boldsymbol{S}(w) - \E[\boldsymbol{S}(w)] \right\|_{\rm F}^{2} \right] \\
& =
 \{ q(1 - 2w) + (n-1)w^2 \} \{ {\rm tr}( \boldsymbol{\Sigma}^2 ) + ({\rm tr}(\boldsymbol{\Sigma}))^2 \} \\
&\quad 
+ 2(1-w)^2 (\lambda_1 + {\rm tr}(\boldsymbol{\Sigma})) \| \boldsymbol{X} \boldsymbol{\alpha} \|^2.
\end{align*}
\end{lemma}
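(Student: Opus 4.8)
The plan is to exploit the independence of $\boldsymbol{S}_R$ and $\boldsymbol{S}_E$, reduce the quantity to two separate Frobenius-norm variances, and compute each using the (noncentral/central) Wishart structure.

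First I would write $\boldsymbol{S}_R = \boldsymbol{Y}'\boldsymbol{P}\boldsymbol{Y}$ and $\boldsymbol{S}_E = \boldsymbol{Y}'\boldsymbol{Q}\boldsymbol{Y}$ with $\boldsymbol{P} = \boldsymbol{X}(\boldsymbol{X}'\boldsymbol{X})^{-1}\boldsymbol{X}'$ and $\boldsymbol{Q} = \boldsymbol{I}_n - n^{-1}\boldsymbol{1}_n\boldsymbol{1}_n' - \boldsymbol{P}$, which are orthogonal projections satisfying $\boldsymbol{P}\boldsymbol{Q} = \boldsymbol{0}$, $\boldsymbol{P}\boldsymbol{1}_n = \boldsymbol{0}$, $\boldsymbol{Q}\boldsymbol{1}_n = \boldsymbol{Q}\boldsymbol{X} = \boldsymbol{0}$. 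Since ${\rm vec}(\boldsymbol{E})$ is Gaussian and $\boldsymbol{P}\boldsymbol{Q} = \boldsymbol{0}$, the Gaussian matrices $\boldsymbol{P}\boldsymbol{Y}$ and $\boldsymbol{Q}\boldsymbol{Y}$ have vanishing cross-covariance, hence are independent, and therefore $\boldsymbol{S}_R$ and $\boldsymbol{S}_E$ are independent. Expanding $\boldsymbol{S}(w) - \E[\boldsymbol{S}(w)] = (1-w)(\boldsymbol{S}_R - \E[\boldsymbol{S}_R]) + w(\boldsymbol{S}_E - \E[\boldsymbol{S}_E])$ and taking squared Frobenius norms, the cross term has expectation $(1-w)w\,\langle \E[\boldsymbol{S}_R - \E\boldsymbol{S}_R],\,\E[\boldsymbol{S}_E - \E\boldsymbol{S}_E]\rangle_{\rm F} = 0$, so it remains to evaluate $\E[\|\boldsymbol{S}_R - \E\boldsymbol{S}_R\|_{\rm F}^2]$ and $\E[\|\boldsymbol{S}_E - \E\boldsymbol{S}_E\|_{\rm F}^2]$.

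For $\boldsymbol{S}_E = (\boldsymbol{Q}\boldsymbol{Y})'(\boldsymbol{Q}\boldsymbol{Y})$, which is a central Wishart matrix $W_p(n-1-q,\boldsymbol{\Sigma})$, I would use ${\rm Cov}((\boldsymbol{S}_E)_{ij},(\boldsymbol{S}_E)_{kl}) = (n-1-q)(\sigma_{ik}\sigma_{jl}+\sigma_{il}\sigma_{jk})$ and sum over all $i,j$ to get $\E[\|\boldsymbol{S}_E - \E\boldsymbol{S}_E\|_{\rm F}^2] = (n-1-q)\{{\rm tr}(\boldsymbol{\Sigma}^2)+({\rm tr}\boldsymbol{\Sigma})^2\}$. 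For $\boldsymbol{S}_R$, choose an orthonormal basis $\boldsymbol{a}_1,\dots,\boldsymbol{a}_q$ of the column space of $\boldsymbol{X}$ with $\boldsymbol{a}_1 = \boldsymbol{X}\boldsymbol{\alpha}/\|\boldsymbol{X}\boldsymbol{\alpha}\|$ (the case $\boldsymbol{X}\boldsymbol{\alpha} = \boldsymbol{0}_n$ being immediate). Then $\boldsymbol{v}_k := \boldsymbol{E}'\boldsymbol{a}_k$ are i.i.d.\ $\mathcal{N}_p(\boldsymbol{0}_p,\boldsymbol{\Sigma})$ and $\boldsymbol{S}_R = (\|\boldsymbol{X}\boldsymbol{\alpha}\|\boldsymbol{\gamma}_1+\boldsymbol{v}_1)(\|\boldsymbol{X}\boldsymbol{\alpha}\|\boldsymbol{\gamma}_1+\boldsymbol{v}_1)' + \sum_{k=2}^q \boldsymbol{v}_k\boldsymbol{v}_k'$, so that $\boldsymbol{S}_R - \E[\boldsymbol{S}_R] = \|\boldsymbol{X}\boldsymbol{\alpha}\|(\boldsymbol{\gamma}_1\boldsymbol{v}_1'+\boldsymbol{v}_1\boldsymbol{\gamma}_1') + \big(\sum_{k=1}^q \boldsymbol{v}_k\boldsymbol{v}_k' - q\boldsymbol{\Sigma}\big)$. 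Splitting off the part $\sum_{k=2}^q\boldsymbol{v}_k\boldsymbol{v}_k' - (q-1)\boldsymbol{\Sigma}$, which is independent of $\boldsymbol{v}_1$ with mean zero, the inner product of the two displayed pieces becomes an odd function of $\boldsymbol{v}_1$, so its expectation vanishes; the centered central Wishart part contributes $q\{{\rm tr}(\boldsymbol{\Sigma}^2)+({\rm tr}\boldsymbol{\Sigma})^2\}$; and $\E\|\boldsymbol{\gamma}_1\boldsymbol{v}_1'+\boldsymbol{v}_1\boldsymbol{\gamma}_1'\|_{\rm F}^2 = 2\E[(\boldsymbol{\gamma}_1'\boldsymbol{v}_1)^2] + 2\E[\|\boldsymbol{v}_1\|^2] = 2\lambda_1 + 2\,{\rm tr}\boldsymbol{\Sigma}$. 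This gives $\E[\|\boldsymbol{S}_R - \E\boldsymbol{S}_R\|_{\rm F}^2] = q\{{\rm tr}(\boldsymbol{\Sigma}^2)+({\rm tr}\boldsymbol{\Sigma})^2\} + 2(\lambda_1+{\rm tr}\boldsymbol{\Sigma})\|\boldsymbol{X}\boldsymbol{\alpha}\|^2$. (Equivalently, one may invoke the standard covariance formula for the noncentral Wishart law of $\boldsymbol{S}_R$ with noncentrality matrix $\|\boldsymbol{X}\boldsymbol{\alpha}\|^2\boldsymbol{\gamma}_1\boldsymbol{\gamma}_1'$.)

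Substituting the two expressions into $(1-w)^2\E[\|\boldsymbol{S}_R - \E\boldsymbol{S}_R\|_{\rm F}^2] + w^2\E[\|\boldsymbol{S}_E - \E\boldsymbol{S}_E\|_{\rm F}^2]$ and noting that the coefficient of ${\rm tr}(\boldsymbol{\Sigma}^2)+({\rm tr}\boldsymbol{\Sigma})^2$ equals $(1-w)^2 q + w^2(n-1-q) = q(1-2w)+(n-1)w^2$ yields the claimed identity. The only genuinely non-mechanical points are (i) the independence of $\boldsymbol{S}_R$ and $\boldsymbol{S}_E$ and (ii) the vanishing of the signal--noise cross term inside $\boldsymbol{S}_R$; everything else is routine fourth-moment bookkeeping for Gaussian and Wishart matrices, so I expect keeping the noncentral-Wishart variance computation organized (point (ii)) to be the part requiring the most care.
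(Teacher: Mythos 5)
Your proposal is correct and follows essentially the same route as the paper: independence of $\boldsymbol{S}_R$ and $\boldsymbol{S}_E$ kills the cross term, the central Wishart second moments handle $\boldsymbol{S}_E$ and the $\boldsymbol{W}$-part of $\boldsymbol{S}_R$, and the vanishing of the signal--noise cross term inside $\boldsymbol{S}_R$ (which the paper isolates as its Lemma~\ref{lem32}, $\E[\boldsymbol{E}'\boldsymbol{X}(\boldsymbol{X}'\boldsymbol{X})^{-1}\boldsymbol{X}'\boldsymbol{E}\boldsymbol{E}'\boldsymbol{X}]=\boldsymbol{0}_{p\times q}$) is exactly your odd-moment observation, phrased in the rotated basis $\boldsymbol{a}_1=\boldsymbol{X}\boldsymbol{\alpha}/\|\boldsymbol{X}\boldsymbol{\alpha}\|$. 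The only cosmetic difference is that the paper expands $\boldsymbol{S}_R-\E[\boldsymbol{S}_R]$ directly in terms of $\boldsymbol{E}'\boldsymbol{X}$ rather than via an orthonormal basis of the column space of $\boldsymbol{X}$; all intermediate quantities agree.
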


\begin{proof}
From the assumption of 
${\rm vec}(\boldsymbol{E}) \sim \mathcal{N}_{np} \left( \boldsymbol{0}_{np}, \boldsymbol{\Sigma} \otimes \boldsymbol{I}_n \right)$, 
it follows that $\boldsymbol{S}_R$ and $\boldsymbol{S}_E$ are independent each other; see, e.g., \cite{RefA03}.
On account of this independence, we have
\begin{align*}
& \E \left[ \left\| \boldsymbol{S}(w) - \E[\boldsymbol{S}(w)] \right\|_{\rm F}^{2} \right] \\
&= \E \left[ {\rm tr} \left( \left( \boldsymbol{S}(w) - \E[\boldsymbol{S}(w)] \right)'\left( \boldsymbol{S}(w) - \E[\boldsymbol{S}(w)] \right) \right) \right] \\
&= \E \left[ {\rm tr} \left( \left( \boldsymbol{S}(w) - \E[\boldsymbol{S}(w)] \right)^2 \right) \right] \\
&= (1-w)^2 \E \left[ {\rm tr} \left( \left( \boldsymbol{S}_R - \E[\boldsymbol{S}_R] \right)^2 \right) \right]
+ w^2 \E \left[ {\rm tr} \left( \left( \boldsymbol{S}_E - \E[\boldsymbol{S}_E] \right)^2 \right) \right].
\end{align*}

First, we evaluate 
$\E [ {\rm tr} ( ( \boldsymbol{S}_R - \E[\boldsymbol{S}_R] )^2 ) ]$
as
\begin{align*}
& \E \Bigl[ {\rm tr} \Bigl( \bigl( \boldsymbol{S}_R - \E[\boldsymbol{S}_R] \bigr)^2 \Bigr) \Bigr]  \\
&= \E \Bigl[ {\rm tr} \Bigl( \bigl\{ 
\boldsymbol{Y}' \boldsymbol{X}\left( \boldsymbol{X}'\boldsymbol{X} \right)^{-1}\boldsymbol{X}' \boldsymbol{Y} 
- \bigl( q\boldsymbol{\Sigma} + \boldsymbol{\gamma}_1\boldsymbol{\alpha}'\boldsymbol{X}'\boldsymbol{X}\boldsymbol{\alpha}\boldsymbol{\gamma}_1' \bigr) \bigr\}^2 \Bigr) \Bigr] \\
&= \E \Bigl[ {\rm tr} \Bigl( \Bigl\{ 
\left( \boldsymbol{1}_n\boldsymbol{\mu}' + \boldsymbol{X}\boldsymbol{\alpha}\boldsymbol{\gamma}_1' + \boldsymbol{E}\right)'
\boldsymbol{X}\left( \boldsymbol{X}'\boldsymbol{X} \right)^{-1}\boldsymbol{X}'\left( \boldsymbol{1}_n\boldsymbol{\mu}' + \boldsymbol{X}\boldsymbol{\alpha}\boldsymbol{\gamma}_1' + \boldsymbol{E} \right) 
\\ & \qquad
- \left( q\boldsymbol{\Sigma} + \boldsymbol{\gamma}_1\boldsymbol{\alpha}'\boldsymbol{X}'\boldsymbol{X}\boldsymbol{\alpha}\boldsymbol{\gamma}_1' \right) \Bigr\}^2 \Bigr) \Bigr] \\
&= \E \left[ {\rm tr} \left( 
\left\{ \boldsymbol{E}'\boldsymbol{X}\left( \boldsymbol{X}'\boldsymbol{X} \right)^{-1}\boldsymbol{X}'\boldsymbol{E} 
+ \boldsymbol{E}'\boldsymbol{X}\boldsymbol{\alpha}\boldsymbol{\gamma}_1' 
+ \boldsymbol{\gamma}_1\boldsymbol{\alpha}'\boldsymbol{X}'\boldsymbol{E} 
- q\boldsymbol{\Sigma} \right\}^2 \right) \right].
\end{align*}
We note that $\boldsymbol{E}'\boldsymbol{X}\left( \boldsymbol{X}'\boldsymbol{X} \right)^{-1}\boldsymbol{X}'\boldsymbol{E}$ is distributed as 
the $p$-dimensional Wishart distribution 
$\mathcal{W}_p \left( q, \boldsymbol{\Sigma} \right)$
 with 
$q$ degrees of freedom and scale matrix parameter $\boldsymbol{\Sigma}$. 
It is well known that if $\boldsymbol{W} \sim \mathcal{W}_p \left( q, \boldsymbol{\Sigma} \right)$, then 
\[ \E \left[ \boldsymbol{W} \right] = q\boldsymbol{\Sigma} , \quad
\E \left[ {\rm tr}\left( \boldsymbol{W}^2 \right) \right]
=(q+q^2){\rm tr}\left( \boldsymbol{\Sigma}^2 \right) +q\left( {\rm tr}\left( \boldsymbol{\Sigma} \right) \right)^2. \] 
Moreover, as it will be seen in Lemma~\ref{lem32} below, it also holds that
\[ \E \left[ \boldsymbol{E}'\boldsymbol{X}\left( \boldsymbol{X}'\boldsymbol{X} \right)^{-1}\boldsymbol{X}'\boldsymbol{E}\boldsymbol{E}'\boldsymbol{X} \right] = \boldsymbol{0}_{p\times q},\]
where $\boldsymbol{0}_{p\times q}$ is the $p \times q$ zero matrix.
Hence, letting $\boldsymbol{W} = \boldsymbol{E}'\boldsymbol{X}\left( \boldsymbol{X}'\boldsymbol{X} \right)^{-1}\boldsymbol{X}'\boldsymbol{E}$, we have
\begin{align*}
& \E \left[ {\rm tr} \left( 
\left\{ \boldsymbol{E}'\boldsymbol{X}\left( \boldsymbol{X}'\boldsymbol{X} \right)^{-1}\boldsymbol{X}'\boldsymbol{E} 
+ \boldsymbol{E}'\boldsymbol{X}\boldsymbol{\alpha}\boldsymbol{\gamma}_1' 
+ \boldsymbol{\gamma}_1\boldsymbol{\alpha}'\boldsymbol{X}'\boldsymbol{E} 
- q\boldsymbol{\Sigma} \right\}^2 \right) \right] \\
&= \E \left[ {\rm tr} \left( 
\left( \boldsymbol{W}
+ \boldsymbol{E}'\boldsymbol{X}\boldsymbol{\alpha}\boldsymbol{\gamma}_1' 
+ \boldsymbol{\gamma}_1\boldsymbol{\alpha}'\boldsymbol{X}'\boldsymbol{E} 
- q\boldsymbol{\Sigma} \right)^2 \right) \right] \\
&= \E \Bigl[ {\rm tr} \Bigl( 
\boldsymbol{W}^2 
- q \boldsymbol{W} \boldsymbol{\Sigma} - q\boldsymbol{\Sigma} \boldsymbol{W} 
+ q^2\boldsymbol{\Sigma}^2 
+ \boldsymbol{E}'\boldsymbol{X}\boldsymbol{\alpha}\boldsymbol{\gamma}_1'\boldsymbol{E}'\boldsymbol{X}\boldsymbol{\alpha}\boldsymbol{\gamma}_1'
\\ &\qquad
+ \boldsymbol{\gamma}_1\boldsymbol{\alpha}'\boldsymbol{X}'\boldsymbol{E}\boldsymbol{\gamma}_1\boldsymbol{\alpha}'\boldsymbol{X}'\boldsymbol{E}
+ \boldsymbol{E}'\boldsymbol{X}\boldsymbol{\alpha}\boldsymbol{\gamma}_1'\boldsymbol{\gamma}_1\boldsymbol{\alpha}'\boldsymbol{X}'\boldsymbol{E}
+ \boldsymbol{\gamma}_1\boldsymbol{\alpha}'\boldsymbol{X}'\boldsymbol{E}\boldsymbol{E}'\boldsymbol{X}\boldsymbol{\alpha}\boldsymbol{\gamma}_1'  
\Bigr) \Bigr] \\
&= \E \Bigl[ {\rm tr} \Bigl( 
\boldsymbol{W}^2 - q^2\boldsymbol{\Sigma}^2 \Bigr) 
+ 2\left( \boldsymbol{\alpha}'\boldsymbol{X}'\boldsymbol{E}\boldsymbol{\gamma}_1 \right)^2 
+ 2\boldsymbol{\alpha}'\boldsymbol{X}'\boldsymbol{E}\boldsymbol{E}'\boldsymbol{X}\boldsymbol{\alpha} \Bigr] \\
&= q{\rm tr}( \boldsymbol{\Sigma}^2 ) + q\left( {\rm tr}( \boldsymbol{\Sigma} ) \right)^2 
+ 2\lambda_1 \| \boldsymbol{X}\boldsymbol{\alpha} \|^2
+ 2{\rm tr} ( \boldsymbol{\Sigma} ) \| \boldsymbol{X}\boldsymbol{\alpha} \|^2.
\end{align*}

Next, we evaluate 
$ \E [ {\rm tr} ( ( \boldsymbol{S}_E - \E[\boldsymbol{S}_E] )^2 ) ]$. 
Since $\boldsymbol{S}_E \sim \mathcal{W}_{p}\left( n-1-q, \boldsymbol{\Sigma} \right)$, it holds that
\begin{align*}
& \E \left[ {\rm tr} \left( \left( \boldsymbol{S}_E - \E[\boldsymbol{S}_E] \right)^2 \right) \right] \\
&= \E \left[ {\rm tr} \left( \boldsymbol{S}_{E}^{2} \right) \right] - {\rm tr} \left( \left( \E[\boldsymbol{S}_E] \right)^2 \right) \\
&= \left\{ (n-1-q)+(n-1-q)^2\right\} {\rm tr}\left( \boldsymbol{\Sigma}^2 \right) \\
&\quad + (n-1-q)\left( {\rm tr}\left( \boldsymbol{\Sigma} \right) \right)^2 
- (n-1-q)^2 {\rm tr}\left( \boldsymbol{\Sigma}^2 \right) \\
&= (n-1-q){\rm tr}( \boldsymbol{\Sigma}^2 ) + (n-1-q)\left( {\rm tr}( \boldsymbol{\Sigma} ) \right)^2.
\end{align*}

We see from the above calculations that
\begin{align*}
& \E \left[ \left\| \boldsymbol{S}(w) - \E[\boldsymbol{S}(w)] \right\|_{\rm F}^{2} \right] \\
&= (1-w)^2 \E \left[ {\rm tr} \left( \left( \boldsymbol{S}_R - \E[\boldsymbol{S}_R] \right)^2 \right) \right]
+ w^2 \E \left[ {\rm tr} \left( \left( \boldsymbol{S}_E - \E[\boldsymbol{S}_E] \right)^2 \right) \right] \\
&= (1-w)^2 \left\{ q{\rm tr}( \boldsymbol{\Sigma}^2 ) + q\left( {\rm tr}( \boldsymbol{\Sigma} ) \right)^2 
+ 2\left( \lambda_1 + {\rm tr}( \boldsymbol{\Sigma} ) \right) \| \boldsymbol{X}\boldsymbol{\alpha} \|^2 \right\} \\
&\quad + w^2\left\{ (n-1-q){\rm tr}( \boldsymbol{\Sigma}^2 ) + (n-1-q)\left( {\rm tr}( \boldsymbol{\Sigma} ) \right)^2 \right\} \\
&= \{ q(1 - 2w) + (n-1)w^2 \} \{ {\rm tr}( \boldsymbol{\Sigma}^2 ) 
+ ({\rm tr}(\boldsymbol{\Sigma}))^2 \} \\
&\quad + 2(1-w)^2 (\lambda_1 + {\rm tr}(\boldsymbol{\Sigma})) \| \boldsymbol{X} \boldsymbol{\alpha} \|^2.
\end{align*}
This completes the proof.
\end{proof}

The following lemma is already used in the proof above.

\begin{lemma}\label{lem32}
It holds that
\[\E \left[ \boldsymbol{E}'\boldsymbol{X}\left( \boldsymbol{X}'\boldsymbol{X} \right)^{-1}\boldsymbol{X}'\boldsymbol{E}\boldsymbol{E}'\boldsymbol{X} \right] = \boldsymbol{0}_{p\times q} . \]
\end{lemma}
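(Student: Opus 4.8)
The plan is to exploit the symmetry of the centered Gaussian error matrix. Since $\boldsymbol{X}$ is a fixed design matrix, the only randomness in the expression is carried by $\boldsymbol{E}$, and the model assumption ${\rm vec}(\boldsymbol{E}) \sim \mathcal{N}_{np}(\boldsymbol{0}_{np}, \boldsymbol{\Sigma}\otimes\boldsymbol{I}_n)$ gives $-\boldsymbol{E} \overset{d}{=} \boldsymbol{E}$. At the same time, the matrix-valued map $\boldsymbol{E} \mapsto \boldsymbol{E}'\boldsymbol{X}(\boldsymbol{X}'\boldsymbol{X})^{-1}\boldsymbol{X}'\boldsymbol{E}\boldsymbol{E}'\boldsymbol{X}$ is homogeneous of degree three in the entries of $\boldsymbol{E}$, so replacing $\boldsymbol{E}$ by $-\boldsymbol{E}$ multiplies it by $(-1)^3=-1$.

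First I would record these two observations and combine them:
\[ \E\bigl[ \boldsymbol{E}'\boldsymbol{X}( \boldsymbol{X}'\boldsymbol{X})^{-1}\boldsymbol{X}'\boldsymbol{E}\boldsymbol{E}'\boldsymbol{X} \bigr] = \E\bigl[ (-\boldsymbol{E})'\boldsymbol{X}( \boldsymbol{X}'\boldsymbol{X})^{-1}\boldsymbol{X}'(-\boldsymbol{E})(-\boldsymbol{E})'\boldsymbol{X} \bigr] = -\E\bigl[ \boldsymbol{E}'\boldsymbol{X}( \boldsymbol{X}'\boldsymbol{X})^{-1}\boldsymbol{X}'\boldsymbol{E}\boldsymbol{E}'\boldsymbol{X} \bigr], \]
where the first equality is the distributional identity $\boldsymbol{E} \overset{d}{=} -\boldsymbol{E}$ and the second is the odd-degree homogeneity. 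Hence the expectation equals its own negative and is therefore $\boldsymbol{0}_{p\times q}$.

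If a fully elementary verification is preferred, I would instead argue entrywise: writing $\boldsymbol{H}=\boldsymbol{X}(\boldsymbol{X}'\boldsymbol{X})^{-1}\boldsymbol{X}'$, the $(a,b)$ entry of the matrix in question equals $\sum_{i,j,l,c} H_{ij} X_{lb}\, E_{ia} E_{jc} E_{lc}$, and each summand has expectation zero because $\E[E_{ia}E_{jc}E_{lc}]$ is a third-order mixed moment of the jointly Gaussian, mean-zero entries of $\boldsymbol{E}$, which vanishes (e.g.\ by Isserlis' theorem). I do not anticipate a genuine obstacle here; the only point that deserves an explicit sentence is that $\boldsymbol{X}$, and hence $\boldsymbol{H}$, is nonrandom, so the degree-three homogeneity in $\boldsymbol{E}$ is exact and the sign-cancellation is legitimate.
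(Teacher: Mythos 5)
Your proof is correct. Both of your arguments (the global sign-symmetry of the centered Gaussian $\boldsymbol{E}$ combined with the odd homogeneity of the map, and the entrywise third-moment computation) are valid; the only implicit requirement for the symmetry argument is that the expectation be finite, which is immediate for Gaussian entries. The paper proceeds differently in form: it sets $\boldsymbol{G}=(\boldsymbol{X}'\boldsymbol{X})^{-1/2}\boldsymbol{X}'\boldsymbol{E}$, observes that the rows $\boldsymbol{g}_1,\dots,\boldsymbol{g}_q$ of $\boldsymbol{G}$ are i.i.d.\ $\mathcal{N}_p(\boldsymbol{0}_p,\boldsymbol{\Sigma})$, and rewrites the target as $\E[\boldsymbol{G}'\boldsymbol{G}\boldsymbol{G}']\,(\boldsymbol{X}'\boldsymbol{X})^{1/2}=\E\bigl[\sum_i \boldsymbol{g}_i\boldsymbol{g}_i'(\boldsymbol{g}_1,\dots,\boldsymbol{g}_q)\bigr](\boldsymbol{X}'\boldsymbol{X})^{1/2}$, which vanishes term by term (by independence and zero mean when the indices differ, and by vanishing of odd Gaussian moments when they coincide). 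So the underlying reason is the same --- odd moments of a symmetric distribution vanish --- but the routes differ: the paper's whitening step exposes the i.i.d.\ row structure that it also exploits in the surrounding Wishart computations, whereas your parity argument is shorter, avoids the square-root transformation entirely, and in fact establishes the lemma for any error distribution that is sign-symmetric with finite third moments, not only the Gaussian case.
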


\begin{proof}
Let $\boldsymbol{g}_1',\boldsymbol{g}_2',\dots,\boldsymbol{g}_q'$ be the row vectors of $\boldsymbol{G}= \left( \boldsymbol{X}'\boldsymbol{X} \right)^{-1/2}\boldsymbol{X}'\boldsymbol{E}$, that is, $\boldsymbol{G} = \left( \boldsymbol{g}_1,\boldsymbol{g}_2,\dots,\boldsymbol{g}_q \right)'$, where $ ( \boldsymbol{X}'\boldsymbol{X} )^{-1/2}$ is the $q \times q$ symmetric matrix satisfying that 
\[ \{ ( \boldsymbol{X}'\boldsymbol{X} )^{-1/2} \}^2  =  \left( \boldsymbol{X}'\boldsymbol{X} \right)^{-1}. \] 
Then, from $\V [{\rm vec}(\boldsymbol{G})]= \boldsymbol{\Sigma} \otimes \boldsymbol{I}_q$, it follows that $\boldsymbol{g}_1,\boldsymbol{g}_2,\dots,\boldsymbol{g}_q$ are independent and identically distributed as $\mathcal{N}_p(\boldsymbol{0}_p,\boldsymbol{\Sigma})$, and thus we see that
\begin{align*}
& \E \left[ \boldsymbol{E}'\boldsymbol{X}\left( \boldsymbol{X}'\boldsymbol{X} \right)^{-1}\boldsymbol{X}'\boldsymbol{E}\boldsymbol{E}'\boldsymbol{X} \right]  
\\ 
& = \E \left[ \boldsymbol{G}'\boldsymbol{G}\boldsymbol{G}'\left( \boldsymbol{X}'\boldsymbol{X} \right)^{1/2} \right] 
= \E \left[ \sum_{i=1}^{q} \boldsymbol{g}_i \boldsymbol{g}_i' \left( \boldsymbol{g}_1,\boldsymbol{g}_2,\dots,\boldsymbol{g}_q \right) \right] \left( \boldsymbol{X}'\boldsymbol{X} \right)^{1/2} 
= \boldsymbol{0}_{p\times q}.
\end{align*}
This completes the proof.
\end{proof}

Now, we establish an upper bound without any unspecified constant for the mean squared error (up to sign) of the estimator $\hat{\boldsymbol{\gamma}}_1(w)$: 
\[ \E \left[ \min_{\theta = -1,1} \| \theta \hat{\boldsymbol{\gamma}}_1(w) -  \boldsymbol{\gamma}_1 \|^2 \right] \]
in the following theorem.

\begin{theorem}\label{thm1}
It holds that
\begin{align}
& \E \left[ \min_{\theta = -1,1} \| \theta \hat{\boldsymbol{\gamma}}_1(w) -  \boldsymbol{\gamma}_1 \|^2 \right] \nonumber \\
&\le \frac{8 a \{ q(1 - 2w) + (n-1)w^2 \}  + 16 bc (1-w)^2  }
{ \left[d  \{ q + (n-1-2q)w \} + c (1-w) \right]^2 }, \label{UP}
\end{align}
where
\begin{equation}\label{abcd}
a = {\rm tr}( \boldsymbol{\Sigma}^2 ) + ({\rm tr}(\boldsymbol{\Sigma}))^2, \
b = \lambda_1 + {\rm tr}(\boldsymbol{\Sigma}), \
c = \| \boldsymbol{X} \boldsymbol{\alpha} \|^2, \
d = \lambda_1 - \lambda_2.
\end{equation}

\end{theorem}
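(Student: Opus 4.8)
The plan is to regard $\hat{\boldsymbol{\gamma}}_1(w)$ as the first principal eigenvector of the random matrix $\boldsymbol{S}(w)$, viewed as a perturbation of the deterministic matrix $\E[\boldsymbol{S}(w)]$, and to control the resulting discrepancy by a Davis--Kahan $\sin\Theta$ inequality expressed in Frobenius norm. The numerator of \eqref{UP} will then come from $8$ times the quantity computed in Lemma~\ref{Lem1}, and the denominator from the squared eigengap of $\E[\boldsymbol{S}(w)]$; the final step is just taking expectations.

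First I would pin down the spectrum of $\E[\boldsymbol{S}(w)]$. Writing $\E[\boldsymbol{S}(w)] = (1-w)\E[\boldsymbol{S}_R] + w\,\E[\boldsymbol{S}_E]$ and inserting the displayed formulas for $\E[\boldsymbol{S}_R]$ and $\E[\boldsymbol{S}_E]$ gives
\[
\E[\boldsymbol{S}(w)] = \kappa\,\boldsymbol{\Sigma} + (1-w)\,c\,\boldsymbol{\gamma}_1\boldsymbol{\gamma}_1', \qquad \kappa := q + (n-1-2q)w ,
\]
with $c=\|\boldsymbol{X}\boldsymbol{\alpha}\|^2$ as in \eqref{abcd}. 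Since $\boldsymbol{\Sigma}=\boldsymbol{\Gamma}\boldsymbol{\Lambda}\boldsymbol{\Gamma}'$ and $\boldsymbol{\gamma}_1\boldsymbol{\gamma}_1'$ are simultaneously diagonalized by $\boldsymbol{\Gamma}$, the eigenvalues of $\E[\boldsymbol{S}(w)]$ are $\kappa\lambda_1+(1-w)c$ (with eigenvector $\boldsymbol{\gamma}_1$) and $\kappa\lambda_2,\dots,\kappa\lambda_p$. Because $n>1+q$ forces $\kappa\ge\min(q,n-1-q)>0$ on $[0,1]$, and $d=\lambda_1-\lambda_2>0$ by assumption, $\boldsymbol{\gamma}_1$ is the leading eigenvector of $\E[\boldsymbol{S}(w)]$ (unique up to sign), with eigengap
\[
\delta := \big(\kappa\lambda_1+(1-w)c\big)-\kappa\lambda_2 = \kappa\,d + (1-w)c = d\{q+(n-1-2q)w\} + c(1-w) > 0 ,
\]
which is exactly the bracket being squared in the denominator of \eqref{UP}.

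Next I would use a deterministic perturbation step. For unit vectors,
\[
\min_{\theta=\pm1}\|\theta\hat{\boldsymbol{\gamma}}_1(w)-\boldsymbol{\gamma}_1\|^2 = 2\big(1-|\langle\hat{\boldsymbol{\gamma}}_1(w),\boldsymbol{\gamma}_1\rangle|\big) \le 2\big(1-\langle\hat{\boldsymbol{\gamma}}_1(w),\boldsymbol{\gamma}_1\rangle^2\big) = 2\sin^2\angle(\hat{\boldsymbol{\gamma}}_1(w),\boldsymbol{\gamma}_1) ,
\]
and the Davis--Kahan $\sin\Theta$ theorem in the form that uses only the eigengap of the unperturbed matrix (the Yu--Wang--Samworth variant) yields $\sin\angle(\hat{\boldsymbol{\gamma}}_1(w),\boldsymbol{\gamma}_1)\le 2\|\boldsymbol{S}(w)-\E[\boldsymbol{S}(w)]\|_{\rm F}/\delta$. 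Combining the two gives, pointwise,
\[
\min_{\theta=\pm1}\|\theta\hat{\boldsymbol{\gamma}}_1(w)-\boldsymbol{\gamma}_1\|^2 \le \frac{8\,\|\boldsymbol{S}(w)-\E[\boldsymbol{S}(w)]\|_{\rm F}^2}{\delta^2}
\]
on the full-probability event that $\boldsymbol{S}(w)$ has a simple leading eigenvalue (so $\hat{\boldsymbol{\gamma}}_1(w)$ is well defined up to sign); off this null event the left-hand side is at most $4$ and is negligible in the expectation. Taking expectations, pulling the deterministic $\delta^2$ out, and substituting $\E[\|\boldsymbol{S}(w)-\E[\boldsymbol{S}(w)]\|_{\rm F}^2] = a\{q(1-2w)+(n-1)w^2\} + 2bc(1-w)^2$ from Lemma~\ref{Lem1} produces \eqref{UP}.

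The spectral bookkeeping for $\E[\boldsymbol{S}(w)]$ and the arithmetic matching $8$ times the Lemma~\ref{Lem1} expression to the numerator are routine. The points requiring care are: (i) checking $\delta>0$ for all $w\in[0,1]$, which rests on $n>1+q$; (ii) selecting the version of the $\sin\Theta$ theorem whose constant and gap convention are consistent — here the factor $8$ is $2\times 2^2$, the outer $2$ from $\min_\theta\|\theta\hat v-v\|^2\le 2\sin^2\angle$ and the $2^2$ from squaring the $2\|\cdot\|_{\rm F}/\delta$ bound, the price one pays for keeping the clean deterministic gap $\delta$ rather than a data-dependent one; and (iii) the measure-zero caveat about a non-simple empirical leading eigenvalue. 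I expect (ii) to be the step that needs the most attention to state correctly.
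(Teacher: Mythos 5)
Your proposal is correct and follows essentially the same route as the paper's proof: compute the eigengap $d\{q+(n-1-2q)w\}+c(1-w)$ of $\E[\boldsymbol{S}(w)]$, bound $\min_{\theta=\pm1}\|\theta\hat{\boldsymbol{\gamma}}_1(w)-\boldsymbol{\gamma}_1\|^2$ by $2\sin^2\angle$, apply Davis--Kahan, pass to the Frobenius norm, and invoke Lemma~\ref{Lem1}. The only cosmetic difference is that the paper states Davis--Kahan in operator norm and then uses $\|\cdot\|_{\rm op}\le\|\cdot\|_{\rm F}$, whereas you invoke the Frobenius-norm form directly; the constants and the final bound are identical.
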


\begin{proof}
First, let 
$\lambda_1(w)$ and $\lambda_2(w)$ be the largest and the second largest eigenvalues of $\E[\boldsymbol{S}(w)]$, respectively.
It follows from
\begin{align*}
\E[\boldsymbol{S}(w)] 
&= \E \left[ (1-w)\boldsymbol{S}_R + w\boldsymbol{S}_E \right] \\
&= (1-w) \left( q\boldsymbol{\Gamma}\boldsymbol{\Lambda}\boldsymbol{\Gamma}' 
+ \| \boldsymbol{X}\boldsymbol{\alpha} \|^2 \boldsymbol{\gamma}_1\boldsymbol{\gamma}_1' \right)
+ w(n-1-q)\boldsymbol{\Gamma}\boldsymbol{\Lambda}\boldsymbol{\Gamma}'  \\
&=  \left\{ q(1-w) + (n-1-q)w \right\} \boldsymbol{\Gamma}\boldsymbol{\Lambda}\boldsymbol{\Gamma}' 
+ (1-w)\| \boldsymbol{X}\boldsymbol{\alpha} \|^2 \boldsymbol{\gamma}_1\boldsymbol{\gamma}_1' 
\end{align*}
that
\begin{align*}
& \lambda_1(w) = \left\{ q(1-w) + (n-1-q)w \right\} \lambda_1 + (1-w) \| \boldsymbol{X}\boldsymbol{\alpha} \|^2 , \\
& \lambda_2(w) = \left\{ q(1-w) + (n-1-q)w \right\} \lambda_2,
\end{align*}
and that $\boldsymbol{\gamma}_1$ is the principal eigenvector of $\E[\boldsymbol{S}(w)] $ corresponding to $\lambda_1(w)$.
The Davis--Kahan theorem implies that
\[
\sin \angle(\boldsymbol{\gamma}_1, \hat{\boldsymbol{\gamma}}_1(w))
\le \frac{2 \| \boldsymbol{S}(w) - \E[\boldsymbol{S}(w)] \|_{{\rm op}}}{ \lambda_1(w) - \lambda_2(w) },
\]
where the angle $\angle$ is taken in $[0,\pi/2]$, 
$\| \cdot \|_{{\rm op}}$ denotes the operator norm; see, for example, \cite{RefN08} and \cite{RefV18} for the Davis--Kahan theorem.
Noting that $\left| \boldsymbol{\gamma}_1'\hat{\boldsymbol{\gamma}}_1(w) \right| \le 1$ and $\| \cdot \|_{{\rm op}} \le \| \cdot \|_{\rm F}$, 
we have
\begin{align*}
& \E \left[ \min_{\theta = -1,1} \| \theta \hat{\boldsymbol{\gamma}}_1(w) -  \boldsymbol{\gamma}_1 \|^2 \right] 
= \E \left[ 2 - 2\max_{\theta = -1,1} \left( \theta \boldsymbol{\gamma}_1'\hat{\boldsymbol{\gamma}}_1(w) \right) \right] \\
&\le \E \left[ 2 - 2\left( \boldsymbol{\gamma}_1'\hat{\boldsymbol{\gamma}}_1(w) \right)^2 \right] 
= \E \left[ 2\left( \sin \angle ( \boldsymbol{\gamma}_1, \hat{\boldsymbol{\gamma}}_1(w) ) \right)^2 \right] \\
& \le \E \left[ 2 \frac{4 \| \boldsymbol{S}(w) - \E[\boldsymbol{S}(w)] \|_{{\rm op}}^{2}}{ \left( \lambda_1(w) - \lambda_2(w) \right)^2} \right] \\
& \le \E \left[ \frac{8 \| \boldsymbol{S}(w) - \E[\boldsymbol{S}(w)] \|_{\rm F}^{2}}
{ \left[ \left\{ q(1-w) + (n-1-q)w \right\} (\lambda_1-\lambda_2) + (1-w) \| \boldsymbol{X}\boldsymbol{\alpha} \|^2 \right]^2} \right] \\
&= \frac{8 a \{ q(1 - 2w) + (n-1)w^2 \}  + 16 bc (1-w)^2 }
{ \left[ d \{ q + (n-1-2q)w \} + c (1-w)  \right]^2 },
\end{align*}
which completes the proof.
\end{proof}

\begin{remark}
Although the estimation of $\boldsymbol{\gamma}_1$ is focused on, corresponding results for other eigenvectors of $\boldsymbol{S}(w)$ follows by the almost same method under adequate assumptions on $\boldsymbol{\Sigma}$.
\end{remark}

\begin{remark}\label{remls}
When $\boldsymbol{\alpha} = \boldsymbol{0}_q$, the eigenvector $\hat{\boldsymbol{\gamma}}_1^{(T)} = \hat{\boldsymbol{\gamma}} (1/2)$ of the sample covariance matrix $\boldsymbol{S}_T$ corresponding to the largest eigenvalue satisfies that
\[
\E \left[ \min_{\theta = -1,1} \| \theta \hat{\boldsymbol{\gamma}}_1^{(T)} -  \boldsymbol{\gamma}_1 \|^2 \right] 
\le \frac{8 \{ {\rm tr}(\boldsymbol{\Sigma}^2) + ({\rm tr}(\boldsymbol{\Sigma}))^2 \} }{ (n-1) (\lambda_1 - \lambda_2)^2 }.
\]
\end{remark}

Using Theorem~\ref{thm1}, we will discuss sufficient conditions for the consistency up to sign of $\hat{\boldsymbol{\gamma}}_1(w)$ 
under some asymptotic regimes, as shown in Sections 4 and 5.
As mentioned in Remark~\ref{remc}, when the dimension $p$ is held fixed, the bound of Theorem~\ref{thm1} seems sharp from the viewpoint of consistency.
Here the meaning of the consistency up to sign of $\hat{\boldsymbol{\gamma}}_1(w)$ is that
\begin{equation}\label{conde}
\min_{\theta = -1,1} \| \theta \hat{\boldsymbol{\gamma}}_1(w) - \boldsymbol{\gamma}_1 \| \stackrel{\mathrm{P}}{\to} 0
\end{equation}
for any value of parameters satisfying underlying assumptions, where $\stackrel{\mathrm{P}}{\to}$ denotes convergence in probability.
Note that \eqref{conde} is equivalent to 
\[ \E \left[ \min_{\theta = -1,1} \| \theta \hat{\boldsymbol{\gamma}}_1(w) - \boldsymbol{\gamma}_1 \|^2 \right] \to 0,\]
because 
\[ 0 \leq \min_{\theta = -1,1} \| \theta \hat{\boldsymbol{\gamma}}_1(w) - \boldsymbol{\gamma}_1 \| \leq \sqrt{2} .\]
Whether $\boldsymbol{\alpha} = \boldsymbol{0}_q$ or not is crucial for $\hat{\boldsymbol{\gamma}}_1(w)$ with $w \in [0,1)$, so, in some settings, consistency will be discussed under the assumption that $\boldsymbol{\alpha} \neq \boldsymbol{0}_q$.

\begin{remark}
We use Lemma~\ref{Lem1} to bound $\E[  \| \boldsymbol{S}(w) - \E[\boldsymbol{S}(w)] \|^{2}_{\rm op} ]$ for the purpose of getting a specific bound that will be used to determine the value of $w$ below.
Evaluating $\E[  \| \boldsymbol{S}(w) - \E[\boldsymbol{S}(w)] \|_{\rm op} ]$ directly may provide a better bound.
When $w=1$ or when $\boldsymbol{\alpha} = \boldsymbol{0}_q, w=1/2$, results such as \citet[Theorem 4]{RefKL17} or \citet[Theorem 4.7.1]{RefV18} can be used.
For example, when $\boldsymbol{\alpha} = \boldsymbol{0}_q$, it holds that
\begin{align*}
& \E \left[ \min_{\theta = -1,1} \| \theta \hat{\boldsymbol{\gamma}}_1^{(T)} -  \boldsymbol{\gamma}_1 \|^2 \right] 
\\ &
\le
\E \left[\sqrt{2} \min_{\theta = -1,1} \| \theta \hat{\boldsymbol{\gamma}}_1^{(T)} -  \boldsymbol{\gamma}_1 \| \right] 
\le \frac{ C}{ \lambda_1 - \lambda_2 } \max \left\{ \sqrt{\frac{\lambda_1 {\rm tr}(\boldsymbol{\Sigma}) }{n-1}}, \frac{ {\rm tr} (\boldsymbol{\Sigma}) }{n-1}  \right\} ,
\end{align*}
where $C$ is a positive constant. 
To discuss consistency, this bound is better than the bound in Remark~\ref{remls} especially when $\lambda_1/ {\rm tr}(\boldsymbol{\Sigma})$ is small.
\end{remark}

In practice, we should determine the value of the weight $w$ for the estimator $\hat{\boldsymbol{\gamma}}_1(w)$. 
Other than $w=0,0.5,1$, it may be reasonable to select the value of $w$ minimizing the upper bound, the right-hand side of \eqref{UP}, for the mean squared error.
The value is provided in the following theorem.

\begin{theorem}\label{thm2}
The right-hand side of \eqref{UP} attains the unique minimum at
\begin{equation}\label{w*} 
w^* 
=  \frac{adq + 2bcd}{2adq+2bcd+ac} ,
\end{equation}
where $a$, $b$, $c$, and $d$ are given in \eqref{abcd}.
\end{theorem}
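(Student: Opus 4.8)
The plan is to regard the right-hand side of \eqref{UP} as a function $g(w)$ of a single real variable, differentiate it, and observe that the resulting stationarity equation is linear in $w$; this produces a single critical point, which will then be identified with $w^{*}$ and verified to be a minimum.

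First I would write the numerator of $g$ as $N(w) = 8a\{q(1-2w)+(n-1)w^{2}\} + 16bc(1-w)^{2}$ and its denominator as $D(w)^{2}$, where $D(w) = d\{q+(n-1-2q)w\} + c(1-w)$ is affine in $w$. Here $D(w)$ is exactly the quantity $\lambda_{1}(w)-\lambda_{2}(w)$ already computed in the proof of Theorem~\ref{thm1}: since $q(1-w)+(n-1-q)w$ is a convex combination of the positive numbers $q$ and $n-1-q$ (recall $n>1+q$) and $c(1-w)\ge0$, one has $D(w)>0$ for every $w\in[0,1]$ (only on $[0,1)$ when $d=0$), the degenerate case $c=d=0$ being excluded because it makes \eqref{UP} vacuous. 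Likewise, $n>1+q$ makes the discriminant of $(n-1)w^{2}-2qw+q$ negative, so $N(w)>0$ for all real $w$; hence $g$ is smooth and strictly positive wherever $D(w)\ne0$.

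Next I would compute $g'(w) = \{N'(w)D(w) - 2N(w)D'(w)\}/D(w)^{3}$. Since $D'$ is constant, the $w^{2}$-terms of $N'(w)D(w)-2N(w)D'(w)$ cancel, and what remains, after expanding $N$ and $D$ and collecting terms, is the affine function $\ell(w) = 16(n-1-q)\{(2adq+ac+2bcd)w - (adq+2bcd)\}$, the factor $16(n-1-q)$ (positive because $n>1+q$) being common to both the coefficient of $w$ and the constant term. As $a>0$ (since $\boldsymbol{\Sigma}$ is positive definite) and $b,c,d\ge0$, the coefficient $2adq+ac+2bcd$ is positive (it vanishes only in the excluded case $c=d=0$), so $\ell$ has the unique zero $w^{*} = (adq+2bcd)/(2adq+2bcd+ac)$, which lies in $[0,1]$ and is the claimed value. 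Moreover $\ell$ is increasing while $D(w)^{3}>0$ near $w^{*}$, so $g'$ changes sign from negative to positive at $w^{*}$; therefore $g$ is decreasing on $[0,w^{*}]$ and increasing on $[w^{*},1]$, which makes $w^{*}$ the unique minimizer of $g$ on $[0,1]$ (and in fact on the whole set where $g$ is defined, since $g>0$ and $g(w)\to+\infty$ at any pole of $D$).

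I expect the only real work to be the expansion that produces $\ell(w)$: one must check that every $w^{2}$-term indeed cancels and that $16(n-1-q)$ genuinely factors out of both surviving coefficients, so that a bulky expression collapses to the stated ratio. I would also confirm the edge cases $d=0$ (where the formula gives $w^{*}=0$) and $c=0$ (where it gives $w^{*}=1/2$) directly, so that the statement needs no extra assumptions on the parameters.
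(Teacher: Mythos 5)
Your proposal is correct and follows essentially the same route as the paper: both write the bound as a ratio with an affine denominator $D(w)$ squared, differentiate, observe that the quadratic terms cancel so the numerator of the derivative reduces to $(n-1-q)\{(2adq+ac+2bcd)w-(adq+2bcd)\}$ (up to the harmless factor $16$ from your normalization), and conclude that the bound is strictly decreasing before $w^{*}$ and strictly increasing after. The only difference is cosmetic: the paper takes $d=\lambda_{1}-\lambda_{2}>0$ as a standing model assumption and so does not need your discussion of the degenerate cases, while your extra checks on the positivity of $N(w)$ and the edge cases $c=0$, $d=0$ are harmless additions.
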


\begin{proof}
The right-hand side of \eqref{UP} is rewritten as
\begin{equation}\label{bouabcd}
8 \frac{ a \{ q(1 - 2w) + (n-1)w^2 \} + 2 bc (1-w)^2 }{ \left[ d \{ q + (n-1-2q)w \} + c (1-w) \right]^2 } ,
\end{equation}
and so the value of $w$ minimizing \eqref{UP} is equivalent to that minimizing
\[ f(w) = \frac{ a \{ q(1 - 2w) + (n-1)w^2 \} + 2 b c (1-w)^2 }{ 2 \left[ d \{ q + (n-1-2q)w \} + c (1-w) \right]^2 }. \]
Since $d \{ q + (n-1-2q)w \} > 0$ for $0 \le w \le 1$ due to $q + (n-1-2q)w =q(1-w)+ (n-1-q)w > 0$ and $d = \lambda_1 - \lambda_2 >0$, 
the function $f(w)$ is continuous on $0 \le w \le 1$. 
Differentiating $f(w)$ with respect to $w$, we have
\begin{align*}
 \frac{\mathrm{d} f (w)}{\mathrm{d} w} 
&=
\frac{ a \{ -2q + 2(n-1)w \} - 4 b c (1-w) }{ 2 \left[ d \{ q + (n-1-2q)w \} + c (1-w) \right]^2 } 
\\ & \qquad
- \frac{  \left[ a \{ q(1 - 2w) + (n-1)w^2 \} + 2 bc (1-w)^2 \right] \left\{ d (n-1-2q) - c \right\} }{ \left[ d \{ q + (n-1-2q)w \} + c (1-w) \right]^3 } \\
&= \frac{ \left\{ (2adq+ac+2bcd)w - (adq + 2bcd) \right\} (n-1-q)}{ \left[ d \{ q + (n-1-2q)w \} + c (1-w) \right]^3 },
\end{align*}
from which we see that $f(w)$ is strictly decreasing on 
\[ 0 \le w<\frac{adq + 2bcd}{2adq+2bcd+ac} \]
and strictly increasing on 
\[ \frac{adq + 2bcd}{2adq+2bcd+ac}<w \le 1. \] 
Hence, the value of $w$ minimizing the function $f(w)$ is given by \eqref{w*}.
This completes the proof.
\end{proof}

A natural estimator for $w^*$ will be given in Section~\ref{sec:6}.
We see from Theorem~\ref{thm2} that 
(i) if $\| \boldsymbol{X} \boldsymbol{\alpha} \|^2 \approx 0$, then $w^* \approx 0.5$, 
(ii) if $\lambda_1-\lambda_2 \approx 0$, then $w^* \approx 0$. 
Moreover, we have the bound $w^* < 2/3$, because it follows from
\[ bd = (\lambda_1 + {\rm tr}(\boldsymbol{\Sigma})) (\lambda_1 - \lambda_2) 
< \lambda_{1}^{2} + \lambda_1 {\rm tr}(\boldsymbol{\Sigma}) 
< {\rm tr}( \boldsymbol{\Sigma}^2 ) + ({\rm tr}(\boldsymbol{\Sigma}))^2 = a \]
that
\[
w^* 
=  \frac{adq + 2bcd}{2adq+2bcd+ac}
<  \frac{ad q + 2 bcd}{2 adq +3 bcd } 
\le \frac{2}{3}.
\]

\section{Consistency under weak identifiability regimes}\label{sec:4}

In this section, we discuss the case of $n \to \infty$ with fixed $p,q$. 
Assume that $n^{-1} \boldsymbol{X}'\boldsymbol{X}$ converges to a positive-definite matrix as $n \to \infty$. 
Let $\boldsymbol{\alpha}$ be fixed. 
Then, $\| \boldsymbol{X} \boldsymbol{\alpha} \|^2 = \Theta (n)$ holds for $\boldsymbol{\alpha} \neq \boldsymbol{0}_q$, 
and  $\| \boldsymbol{X} \boldsymbol{\alpha} \|^2 = 0$ holds for $\boldsymbol{\alpha} = \boldsymbol{0}_q$, 
where $a_n = \Theta (n^u)$ for a sequence $\{ a_n \}_{n \in \mathbb{N}}$ means that 
$\liminf (a_n / n^u) >0$ and $\limsup (a_n / n^u) < \infty$ as $n \to \infty$. 
If $\boldsymbol{\Sigma}$ is fixed (with $\lambda_1-\lambda_2>0$), then $\hat{\boldsymbol{\gamma}}_1(w)$ is always a consistent estimator of $\boldsymbol{\gamma}_1$ up to sign  for all $w \in (0,1]$, because the upper bound for the mean squared error of $\hat{\boldsymbol{\gamma}}_1(w)$ in \eqref{UP} converges to $0$ as follows:
\begin{align*}
& \E \left[ \min_{\theta = -1,1} \| \theta \hat{\boldsymbol{\gamma}}_1(w) -  \boldsymbol{\gamma}_1 \|^2 \right] \nonumber \\
&\le \frac{8 a \{ q(1 - 2w) + (n-1)w^2 \}  + 16 bc (1-w)^2 }{ \left[ d \{ q + (n-1-2q)w \}  + c (1-w)  \right]^2 } \\
&= \frac{8 a \{ q(1 - w)^2 + (n-1-q)w^2 \} + 16 bc (1-w)^2}{ \left[ d \{ q(1-w) + (n-1-q)w \} + c (1-w)  \right]^2 }
= \frac{ \Theta (n)}
{ \left( \Theta (n) \right)^2 } 
= \Theta \left( n^{-1} \right),
\end{align*}
where the notation \eqref{abcd} is used.
When $\boldsymbol{\alpha} \neq \boldsymbol{0}_q$, so is $\hat{\boldsymbol{\gamma}}_1(0)$.
Hence, in this section, we focus on the case of $\lambda_1 - \lambda_2 \to 0$ as $n \to \infty$.
Such regimes are called weak identifiability regimes in \cite{RefPRV20} and \cite{RefPV23} in which some specific structures for $\boldsymbol{\Sigma}$ are assumed. 
These regimes correspond to the cases where the eigenvector of $\boldsymbol{\Sigma}$ corresponding to the largest eigenvalue is not identifiable in the limit.
More specifically, we consider asymptotic regimes $n\to\infty$ under the following assumption.

\begin{assumption}\label{ass1}
As $n \to \infty$ with $q$, $p$, and $\boldsymbol{\alpha}$ held fixed, suppose the followings:
\begin{itemize}
\item[(i)]
$\boldsymbol{X}$ satisfies that $n^{-1} \boldsymbol{X}' \boldsymbol{X}$ converges to a finite positive-definite matrix; 
\item[(ii)]
$\boldsymbol{\Sigma}$ satisfies that
${\rm tr}(\boldsymbol{\Sigma}) = \Theta (1)$, 
${\rm tr}(\boldsymbol{\Sigma}^2) = \Theta (1)$, and
$\lambda_1 - \lambda_2 = \Theta (n^{-\eta})$
for some $\eta > 0$.
\end{itemize}
\end{assumption}

The consistency of $\hat{\boldsymbol{\gamma}}_1(w)$ up to sign is established in the following theorem.

\begin{theorem}\label{thm3}
Let Assumption~\ref{ass1} hold.
Then, as $n\to \infty$ with $q$, $p$, and $\boldsymbol{\alpha}$ held fixed, sufficient conditions for \eqref{conde} are given by
\[ \begin{array}{ll}
\boldsymbol{\alpha} \neq \boldsymbol{0}_q & \mbox{for} \ w=0,\\
\boldsymbol{\alpha} \neq \boldsymbol{0}_q \ \mbox{or} \ \eta< 1/2 & \mbox{for} \ 0<w<1,\\
 \eta< 1/2 & \mbox{for} \ w=1.
\end{array} \]
\end{theorem}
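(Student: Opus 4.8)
The plan is to derive the whole statement from the non-asymptotic bound \eqref{UP} of Theorem~\ref{thm1}, together with the equivalence noted just after \eqref{conde}, namely that \eqref{conde} holds if and only if $\E[\min_{\theta=-1,1}\|\theta\hat{\boldsymbol{\gamma}}_1(w)-\boldsymbol{\gamma}_1\|^2]\to 0$. First I would record the orders of the quantities in \eqref{abcd} under Assumption~\ref{ass1}: we have $a=\Theta(1)$ and $b=\Theta(1)$ from part~(ii), $d=\lambda_1-\lambda_2=\Theta(n^{-\eta})$ also from part~(ii), and, as already observed in the text preceding the assumption, $c=\|\boldsymbol{X}\boldsymbol{\alpha}\|^2=\Theta(n)$ when $\boldsymbol{\alpha}\neq\boldsymbol{0}_q$ and $c=0$ when $\boldsymbol{\alpha}=\boldsymbol{0}_q$. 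It then remains to substitute these orders into the right-hand side of \eqref{UP}, track the power of $n$, and check that it is negative exactly under the stated conditions.

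For $w=1$ the bound \eqref{UP} simplifies to $8a/\{d^2(n-1-q)\}=\Theta(n^{2\eta-1})$, which tends to $0$ precisely when $\eta<1/2$. For a fixed $w\in(0,1)$, the numerator of \eqref{UP} is $\Theta(n)$: its first term $8a\{q(1-2w)+(n-1)w^2\}$ has leading part $8aw^2(n-1)>0$ for all large $n$ (since $w$ is a fixed positive constant), and the second term $16bc(1-w)^2$ is $O(n)$. The bracket in the denominator equals $d\{q(1-w)+(n-1-q)w\}+c(1-w)$, whose first summand is $\Theta(n^{1-\eta})$; if $\boldsymbol{\alpha}\neq\boldsymbol{0}_q$ the summand $c(1-w)=\Theta(n)$ dominates, the denominator is $\Theta(n^2)$, and the bound is $\Theta(n^{-1})\to0$, whereas if $\boldsymbol{\alpha}=\boldsymbol{0}_q$ then $c=0$, the denominator is $\Theta(n^{2-2\eta})$, and the bound is $\Theta(n^{2\eta-1})$, which vanishes iff $\eta<1/2$; this gives the condition ``$\boldsymbol{\alpha}\neq\boldsymbol{0}_q$ or $\eta<1/2$''. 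Finally for $w=0$ the bound is $(8aq+16bc)/(dq+c)^2$: when $\boldsymbol{\alpha}\neq\boldsymbol{0}_q$, $c=\Theta(n)$ dominates both numerator and denominator, giving $\Theta(n^{-1})\to0$; when $\boldsymbol{\alpha}=\boldsymbol{0}_q$, the bound reduces to $8aq/(dq)^2=\Theta(n^{2\eta})$, which diverges, so this route gives nothing and $\boldsymbol{\alpha}\neq\boldsymbol{0}_q$ must be required for $w=0$.

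I do not expect any genuinely hard step: the argument is purely a bookkeeping of $\Theta$-orders in the explicit formula \eqref{UP}. The only points needing care are (a) the case splits according to whether $c$ vanishes --- when $\boldsymbol{\alpha}=\boldsymbol{0}_q$ one cannot say ``$c$ dominates'' and must recompute the order with $c=0$; (b) noticing that for $w=0$ and $\boldsymbol{\alpha}=\boldsymbol{0}_q$ the numerator retains the constant $8aq=\Theta(1)$ while the denominator $(dq)^2\to0$, which is precisely why $\hat{\boldsymbol{\gamma}}_1^{(R)}$ is not covered there; and (c) verifying, for $0<w<1$, that $q(1-2w)+(n-1)w^2>0$ for all large $n$, which is immediate since $w$ is a fixed constant in $(0,1)$.
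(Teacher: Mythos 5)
Your proposal is correct and follows essentially the same route as the paper: both substitute the orders $a,b=\Theta(1)$, $d=\Theta(n^{-\eta})$, and $c=0$ or $\Theta(n)$ into the bound \eqref{UP} and track the resulting power of $n$ case by case. The only cosmetic difference is that you organize the cases by $w$ first and then by whether $\boldsymbol{\alpha}$ vanishes, while the paper does the reverse; the computations and conclusions are identical.
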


\begin{proof}
In this proof, we use the notation \eqref{abcd} for simplicity.
We notice that $a=\Theta(1)$, $b=\Theta(1)$, $d = \Theta(n^{-\eta})$, and that $c = 0$ under $\boldsymbol{\alpha}=\boldsymbol{0}_q$ and $c = \Theta(n)$ under $\boldsymbol{\alpha} \neq \boldsymbol{0}_q$. 
First, we discuss the case of $\boldsymbol{\alpha} \neq \boldsymbol{0}_q$, that is, $\| \boldsymbol{X} \boldsymbol{\alpha} \|^2 = \Theta (n)$. 
If $0 < w < 1$, then
\begin{align*}
& \E \left[ \min_{\theta = -1,1} \| \theta \hat{\boldsymbol{\gamma}}_1(w) -  \boldsymbol{\gamma}_1 \|^2 \right] \nonumber \\
& \le \frac{8 a \{ q(1 - 2w) + (n-1)w^2 \} + 16 bc (1-w)^2 }
{ \left[ d \{ q + (n-1-2q)w \} + c (1-w) \right]^2 } \\
& = \frac{ \Theta (n) + \Theta (n) }
{ \left( \Theta (n^{1-\eta}) + \Theta (n) \right)^2 } 
 = \frac{ \Theta (n) }
{ \left( \Theta (n) \right)^2 } 
 = \Theta \left( n^{-1} \right).
\end{align*}
If $w=0$, then
\begin{align*}
& \E \left[ \min_{\theta = -1,1} \| \theta \hat{\boldsymbol{\gamma}}_1(0) -  \boldsymbol{\gamma}_1 \|^2 \right] \\
& \le \frac{8 a q  + 16 bc }
{ ( d q + c )^2 } 
= \frac{ \Theta (1) + \Theta (n) }
{ \left( \Theta (n^{-\eta}) + \Theta (n) \right)^2 } 
 = \frac{ \Theta (n) }
{ \left( \Theta (n) \right)^2 } 
 = \Theta \left( n^{-1} \right).
\end{align*}
If $w=1$, then
\begin{align*}
& \E \left[ \min_{\theta = -1,1} \| \theta \hat{\boldsymbol{\gamma}}_1(1) -  \boldsymbol{\gamma}_1 \|^2 \right] \\
& \le \frac{8 a (n-1-q)  }{ d^2 (n-1-q)^2 }
= \frac{ \Theta (n) }{ \left( \Theta (n^{1-\eta})\right)^2 } 
 = \frac{ \Theta (n) }{ \Theta (n^{2-2\eta})  } 
 = \Theta \left( n^{2\eta -1} \right).
\end{align*}
Hence, when $\boldsymbol{\alpha} \neq \boldsymbol{0}_q$, 
\eqref{conde} always holds for $0\le w < 1$ 
but \eqref{conde} holds if $\eta < 1/2$ for $w=1$.

Next, we discuss the case of $\boldsymbol{\alpha} = \boldsymbol{0}_q$. 
If $0 < w \le 1$, then
\begin{align*}
& \E \left[ \min_{\theta = -1,1} \| \theta \hat{\boldsymbol{\gamma}}_1(w) -  \boldsymbol{\gamma}_1 \|^2 \right] \\
& \le \frac{8 a \{ q(1 - 2w) + (n-1)w^2 \} }
{ d^2 \{ q + (n-1-2q)w \}^2 } 
= \frac{ \Theta (n) }{ \left( \Theta (n^{1-\eta}) \right)^2 } 
 = \frac{ \Theta (n) }{\Theta (n^{2-2\eta}) }
 = \Theta \left( n^{2\eta -1} \right).
\end{align*}
If $w=0$, then
\begin{align*}
\E \left[ \min_{\theta = -1,1} \| \theta \hat{\boldsymbol{\gamma}}_1(0) -  \boldsymbol{\gamma}_1 \|^2 \right]
\le \frac{8 a q }{ d^2 q^2 }
= \frac{ \Theta (1) }
{ \left( \Theta (n^{-\eta}) \right)^2 } 
 = \Theta \left( n^{2\eta} \right).
\end{align*}
Hence, when $\boldsymbol{\alpha} = \boldsymbol{0}_q$, \eqref{conde} holds if $\eta < 1/2$ for $0 < w \le 1$ but does not necessarily hold for $w=0$. 

Summarizing the above results, we see that \eqref{conde} with $0<w<1$ holds if at least one of $\boldsymbol{\alpha} \neq \boldsymbol{0}_q$ and $\eta < 1/2$ is satisfied, \eqref{conde} with $w=0$ holds if $\boldsymbol{\alpha} \neq \boldsymbol{0}_q$, and \eqref{conde} with $w=1$ holds if $\eta < 1/2$.
\end{proof}

\begin{remark}\label{remc}
From Remarks~\ref{remloc} and \ref{remls}, when $\boldsymbol{\alpha} = \boldsymbol{0}_q$, if $\lambda_1 - \lambda_2 \to 0$ and $n^{1/2} (\lambda_1 - \lambda_2) \to \infty$, then consistency up to sign of the leading principal eigenvector $\hat{\boldsymbol{\gamma}}_1^{(T)}$ holds.
It is consistent with a result in \citet[Lemma 2.3]{RefPRV20}.
Also from \citet[Lemma 2.3]{RefPRV20}, if $\limsup_{n\to\infty} n^{1/2} (\lambda_1 - \lambda_2) < \infty$ then $\hat{\boldsymbol{\gamma}}_1^{(T)}$ is no more consistent, because their setting is a special case of ours.
\end{remark}

Next, the consistency of $\hat{\boldsymbol{\gamma}}_1(w^*)$ up to sign is established as follows.

\begin{theorem}\label{thm4}
Let Assumption~\ref{ass1} hold.
Then, as $n\to \infty$ with $q$, $p$, and $\boldsymbol{\alpha}$ held fixed, a sufficient condition for 
\begin{equation}\label{condea}
\min_{\theta = -1,1} \| \theta \hat{\boldsymbol{\gamma}}_1(w^*) - \boldsymbol{\gamma}_1 \| \stackrel{\mathrm{P}}{\to} 0
\end{equation}
is given by
\[ \boldsymbol{\alpha} \neq \boldsymbol{0}_q \ \mbox{or} \ \eta<\frac{1}{2}. \]
\end{theorem}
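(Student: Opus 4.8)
The plan is to leverage the minimality property of $w^*$ established in Theorem~\ref{thm2}: the right-hand side of \eqref{UP}, which I denote $B(w)$, attains its minimum over $0 \le w \le 1$ at $w = w^*$, and moreover $0 \le w^* < 2/3$, so $w^*$ is a genuine point of $[0,1]$ for every $n$. The crucial observation is that $w^*$ is a deterministic quantity depending only on $\boldsymbol{\Sigma}$, $\boldsymbol{X}$, $\boldsymbol{\alpha}$, $q$, and $n$ (not on the data), so the non-asymptotic inequality of Theorem~\ref{thm1} applies verbatim with $w$ replaced by $w^*$, despite $w^*$ varying with $n$. Combining this with minimality, for every fixed $w_0 \in [0,1]$,
\[
\E \left[ \min_{\theta = -1,1} \| \theta \hat{\boldsymbol{\gamma}}_1(w^*) - \boldsymbol{\gamma}_1 \|^2 \right] \le B(w^*) \le B(w_0) .
\]
Hence it suffices to exhibit, for each alternative in the hypothesis, a single value $w_0$ making $B(w_0) \to 0$ as $n \to \infty$.

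If $\boldsymbol{\alpha} \neq \boldsymbol{0}_q$, then $c = \| \boldsymbol{X}\boldsymbol{\alpha} \|^2 = \Theta(n)$, and taking $w_0 = 0$ the computation already carried out in the proof of Theorem~\ref{thm3} gives $B(0) = (8aq + 16bc)/(dq + c)^2 = \Theta(n)/(\Theta(n))^2 = \Theta(n^{-1}) \to 0$. If instead $\eta < 1/2$ with $\boldsymbol{\alpha}$ arbitrary, take $w_0 = 1$: then $B(1) = 8a(n-1-q)/\{ d^2 (n-1-q)^2 \} = \Theta(n)/\Theta(n^{2-2\eta}) = \Theta(n^{2\eta - 1}) \to 0$, and this holds whether or not $c$ vanishes, because $c$ does not enter $B(1)$. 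These two cases exhaust the hypothesis ``$\boldsymbol{\alpha} \neq \boldsymbol{0}_q$ or $\eta < 1/2$'', so in all cases $\E[\min_{\theta} \| \theta \hat{\boldsymbol{\gamma}}_1(w^*) - \boldsymbol{\gamma}_1 \|^2] \to 0$, which is equivalent to \eqref{condea} since $0 \le \min_{\theta} \| \theta \hat{\boldsymbol{\gamma}}_1(w^*) - \boldsymbol{\gamma}_1 \| \le \sqrt{2}$.

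An alternative, more computational route is to substitute the closed form \eqref{w*} directly: when $\boldsymbol{\alpha} = \boldsymbol{0}_q$ one has $c = 0$, hence $w^* = 1/2$, so $\hat{\boldsymbol{\gamma}}_1(w^*) = \hat{\boldsymbol{\gamma}}_1^{(T)}$ and the claim reduces to the $w = 1/2$ part of Theorem~\ref{thm3}; when $\boldsymbol{\alpha} \neq \boldsymbol{0}_q$ one checks that $w^* = \Theta(n^{-\eta}) \to 0$ and feeds this order into \eqref{UP}. I do not anticipate a genuine obstacle here: the only point worth flagging is that $w^*$ is a moving target, which is harmless precisely because the bound of Theorem~\ref{thm1} is uniform (non-asymptotic) in $w \in [0,1]$, and everything else is the $\Theta(\cdot)$-bookkeeping already performed in the proof of Theorem~\ref{thm3}.
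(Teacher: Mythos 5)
Your proof is correct, but it takes a genuinely different route from the paper's. The paper proves Theorem~\ref{thm4} by first determining the asymptotic order of $w^*$ itself ($w^* = 1/2$ exactly when $\boldsymbol{\alpha} = \boldsymbol{0}_q$, and $w^* = \Theta(n^{-\eta})$ when $\boldsymbol{\alpha} \neq \boldsymbol{0}_q$) and then substituting that order into the bound \eqref{UP} and carrying out the $\Theta(\cdot)$ bookkeeping term by term. You instead bypass any computation of $w^*$ entirely: since $w^*$ is a deterministic point of $[0,1]$ (indeed $0 < w^* < 2/3$), Theorem~\ref{thm1} applies to it, and Theorem~\ref{thm2} gives $B(w^*) \le B(w_0)$ for every fixed $w_0 \in [0,1]$, so the conclusion follows by comparison with $w_0 = 0$ (when $\boldsymbol{\alpha} \neq \boldsymbol{0}_q$) and $w_0 = 1$ (when $\eta < 1/2$), reusing verbatim the computations already done in Theorem~\ref{thm3}. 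Both arguments are valid; the paper's buys the explicit rate $\Theta(n^{-1})$ for the bound at $w^*$ in the case $\boldsymbol{\alpha} \neq \boldsymbol{0}_q$, whereas yours is shorter, avoids any asymptotic analysis of $w^*$, and makes transparent why the sufficient condition for $\hat{\boldsymbol{\gamma}}_1(w^*)$ is exactly the disjunction of the conditions obtained for fixed $w$ in Theorem~\ref{thm3}. Notably, the same one-line minimality argument applied to Theorem~\ref{thm5} would yield Theorem~\ref{thm6} ($\delta > \min\{2-2\beta, 1\}$ being the weakest of the three conditions there) without the lengthy case analysis the paper performs, so your technique is the more economical of the two.
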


\begin{proof}
In this proof, we use the notation \eqref{abcd} for simplicity, and remark that $w^* = 1/2$ under $\boldsymbol{\alpha}=\boldsymbol{0}_q$ and that $w^* = \Theta(n^{-\eta})$ under $\boldsymbol{\alpha} \neq \boldsymbol{0}_q$. 
For $\boldsymbol{\alpha}=\boldsymbol{0}_q$, since $w^* = 1/2$, we see from Theorem~\ref{thm3} that $\eta < 1/2$ is a sufficient condition for \eqref{condea}. 
For $\boldsymbol{\alpha} \neq \boldsymbol{0}_q$,
\begin{align*}
& \E \left[ \min_{\theta = -1,1} \| \theta \hat{\boldsymbol{\gamma}}_1(w^*) -  \boldsymbol{\gamma}_1 \|^2 \right] \nonumber \\
& \le \frac{8 a \{ q(1 - w^*)^2 + (n-1-q) (w^*)^2 \} + 16 bc (1-w^*)^2 }{ \left[ d \{ q(1-w^*) + (n-1-q)w^* \} + c (1-w^*) \right]^2 } \\
& = \frac{ \Theta (1) + \Theta (n^{1-2\eta}) + \Theta (n)}{ \left( \Theta (n^{-\eta}) + \Theta (n^{1-2\eta}) + \Theta (n) \right)^2 } 
 = \frac{\Theta (n)}
{ \left( \Theta (n) \right)^2 } 
 = \Theta \left( n^{-1} \right),
\end{align*}
which ensures that \eqref{condea} holds.
\end{proof}

\section{Consistency under large $p$, large $n$ regimes}\label{sec:5}

In this section, we consider ``large $p$, large $n$'' regimes under which $n$ and $p$ tend to positive infinity simultaneously.
As introduced in Section~\ref{sec:1}, these regimes are frequently discussed in the literature.
We suppose the following assumption.

\begin{assumption}\label{ass2}
As $n$ and $p$ tend to $\infty$ satisfying that ${n}/{p^{\delta}}$ converges to a finite positive value for some $\delta \in (0,\infty)$ with $q$ held fixed, suppose the follwings:
\begin{itemize}
\item[(i)]
$\boldsymbol{X}$ and $\boldsymbol{\alpha}$ satisfy that ${\| \boldsymbol{X} \boldsymbol{\alpha} \|^2} = \Theta(p^\delta)$;
\item[(ii)]
$\boldsymbol{\Sigma}$ satisfies that ${{\rm tr}(\boldsymbol{\Sigma})} = \Theta(p)$ and $(\lambda_1-\lambda_2) = \Theta(p^\beta)$ for some $\beta \in (-\infty,1]$.
\end{itemize}
\end{assumption}

Under Assumption~\ref{ass2}, $\beta > 0$ implies that the largest eigenvalue is (asymptotically) spiked in the sense that it diverges to positive infinity, while non-spiked eigenvalues imply $\beta \leq 0$.
The cases of $\beta < 0$ correspond to weak identifiability regimes. 
Moreover, $\delta < 1$ implies $p/n \to \infty$.

\begin{remark}
Spiked eigenvalue assumptions on covariance matrices are typically assumed in high-dimensional multivariate analysis; see \cite{RefSSZM16}, \cite{RefWF17}, \cite{RefYA13}, and so on.
\end{remark}

The following theorem provides sufficient conditions for the consistency up to sign of $\hat{\boldsymbol{\gamma}}_1(w)$.
Recall that the meaning of the consistency is explained in \eqref{conde} that works even when the dimension $p$ of the parameter $\boldsymbol{\gamma}_1$ diverges.

\begin{theorem}\label{thm5}
Let Assumption~\ref{ass2} hold.
Then, as $p$ and $n$ tend to $\infty$ satisfying that ${n}/{p^{\delta}}$ converges to a finite positive value  with $q$ held fixed, sufficient conditions for \eqref{conde} are given by
\[ \begin{array}{ll}
\delta > 1 & \mbox{for} \ w=0,\\
\delta > \min \{ 2-2\beta, 2 \} & \mbox{for} \ 0<w<1,\\
\delta > 2-2\beta & \mbox{for} \ w=1 .
\end{array} \]
\end{theorem}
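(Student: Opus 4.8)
The plan is to substitute the orders of magnitude prescribed by Assumption~\ref{ass2} into the non-asymptotic bound \eqref{UP} of Theorem~\ref{thm1} and, for each of the three ranges of $w$, read off the exponent of $p$ in the resulting expression; recall that by the equivalence noted just after \eqref{conde}, it suffices to show that this upper bound tends to $0$.

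First I would record the relevant orders. Writing $a,b,c,d$ as in \eqref{abcd}, Assumption~\ref{ass2} gives $\mathrm{tr}(\boldsymbol{\Sigma}) = \Theta(p)$ and hence $(\mathrm{tr}(\boldsymbol{\Sigma}))^2 = \Theta(p^2)$; since $\mathrm{tr}(\boldsymbol{\Sigma}^2) = \sum_i \lambda_i^2 \le (\sum_i \lambda_i)^2 = (\mathrm{tr}(\boldsymbol{\Sigma}))^2$, this yields $a = \Theta(p^2)$. Similarly $\lambda_1 \le \mathrm{tr}(\boldsymbol{\Sigma})$ gives $b = \Theta(p)$. Moreover $c = \|\boldsymbol{X}\boldsymbol{\alpha}\|^2 = \Theta(p^\delta)$, $d = \lambda_1 - \lambda_2 = \Theta(p^\beta)$, $n = \Theta(p^\delta)$, and $\beta \le 1$.

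Then I would treat the three cases separately. For $w = 0$, the right-hand side of \eqref{UP} equals $(8aq + 16bc)/(dq + c)^2$, whose numerator is $\Theta(p^2) + \Theta(p^{1+\delta})$ and whose denominator is $(\Theta(p^\beta) + \Theta(p^\delta))^2$; under the hypothesis $\delta > 1$ (so in particular $\delta > \beta$ and $1+\delta > 2$) this equals $\Theta(p^{1+\delta})/\Theta(p^{2\delta}) = \Theta(p^{1-\delta}) \to 0$. For $w = 1$, \eqref{UP} reduces to $8a/\bigl(d^2(n-1-q)\bigr) = \Theta(p^{2-2\beta-\delta})$, which vanishes iff $\delta > 2-2\beta$. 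For $0 < w < 1$ (with $w$ fixed), I would use the identities $q(1-2w) + (n-1)w^2 = q(1-w)^2 + (n-1-q)w^2 = \Theta(p^\delta)$ and $q + (n-1-2q)w = q(1-w) + (n-1-q)w = \Theta(p^\delta)$; then the numerator of \eqref{UP} is $\Theta(p^{2+\delta}) + \Theta(p^{1+\delta}) = \Theta(p^{2+\delta})$ and the denominator is $\bigl(\Theta(p^{\beta+\delta}) + \Theta(p^\delta)\bigr)^2 = \Theta\bigl(p^{2\delta + 2\max\{\beta,0\}}\bigr)$, so \eqref{UP} is $\Theta\bigl(p^{\,2-\delta-2\max\{\beta,0\}}\bigr)$, which tends to $0$ iff $\delta > 2 - 2\max\{\beta,0\} = \min\{2-2\beta,2\}$.

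The only genuine subtlety — hence the step I expect to be the main obstacle — is deciding which of the two summands in the denominator of \eqref{UP}, namely the ``eigengap times degrees of freedom'' term $d\{q + (n-1-2q)w\}$ and the ``signal'' term $c(1-w)$, dominates; this depends on the sign of $\beta$, and it is precisely this case split (encoded in $\max\{\beta,0\}$) that produces the condition $\delta > \min\{2-2\beta,2\}$ in the range $0<w<1$. Everything else is routine bookkeeping of exponents, together with the observation that for $w$ bounded away from $0$ the term $(n-1)w^2$ dominates $q(1-2w)$, so that $q(1-2w)+(n-1)w^2$ is genuinely of order $n$; this is exactly what forces the case $w=0$ to be handled on its own.
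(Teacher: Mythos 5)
Your proposal is correct and follows essentially the same route as the paper: plug the orders $a=\Theta(p^2)$, $b=\Theta(p)$, $c=\Theta(p^\delta)$, $d=\Theta(p^\beta)$, $n=\Theta(p^\delta)$ into the bound \eqref{UP} and track the dominant exponent in numerator and denominator for each of the three cases, with the case split on the sign of $\beta$ (your $\max\{\beta,0\}$ formulation is just a compact rewriting of the paper's $\min\{2-2\beta-\delta,\,2-\delta\}$). The only cosmetic difference is that for $w=0$ you argue directly under the hypothesis $\delta>1$, whereas the paper also records the rate for $\delta<1$ before concluding; this changes nothing of substance.
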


\begin{proof}
In this proof, we use the notation \eqref{abcd} for simplicity and remark that $a = \Theta(p^2)$, $b = \Theta(p)$, $c= \Theta(p^\delta)$, $d = \Theta(p^\beta)$.
For $0 < w < 1$, it holds that
\begin{align*}
& \E \left[ \min_{\theta = -1,1} \| \theta \hat{\boldsymbol{\gamma}}_1(w) -  \boldsymbol{\gamma}_1 \|^2 \right] \\
& \le \frac{8 a \{ q(1 - 2w) + (n-1)w^2 \} + 16 bc (1-w)^2 }
{ \left[ d \{ q + (n-1-2q)w \} + c (1-w) \right]^2 } \\
&= \frac{\Theta(p^{2+\delta}) + \Theta(p^{1+\delta})}{\left( \Theta(p^{\beta+\delta}) + \Theta(p^{\delta})\right)^2} 
= \frac{\Theta(p^{2+\delta})}{\Theta(p^{\max \{ 2\beta + 2\delta, 2\delta \}})}
= \Theta \left( p^{ \min\{ 2 - 2\beta - \delta, 2 - \delta \} } \right),
\end{align*}
which means that if $\delta > \min\{ 2 - 2\beta, 2 \}$, then the mean squared error of $\hat{\boldsymbol{\gamma}}_1(w)$ 
converges to $0$, and thus \eqref{conde} holds.

For $w=1$,
\begin{align*}
& \E \left[ \min_{\theta = -1,1} \| \theta \hat{\boldsymbol{\gamma}}_1(1) -  \boldsymbol{\gamma}_1 \|^2 \right] 
\le \frac{8 a (n-1-q)}{ d^2 ( n-1-q )^2 } 
= \frac{\Theta(p^{2+\delta})}{\Theta(p^{2\beta + 2\delta})}
= \Theta \left( p^{2 - 2\beta - \delta} \right),
\end{align*}
from which we see that if $\delta > 2 - 2\beta$, 
then \eqref{conde} holds.

For $w=0$,
\begin{align*}
& \E \left[ \min_{\theta = -1,1} \| \theta \hat{\boldsymbol{\gamma}}_1(0) -  \boldsymbol{\gamma}_1 \|^2 \right] \\
& \le \frac{8 a q + 16 bc  }{ (d q + c )^2 } 
= \frac{\Theta(p^2) + \Theta(p^{1+\delta})}{\left( \Theta(p^{\beta}) + \Theta(p^{\delta}) \right)^2} 
= \left\{ \begin{array}{ll}
\Theta \left( p^{1-\delta}\right) & (\delta \ge 1), \\
\Theta \left( p^{ \min \{ 2 - 2\beta, 2 - 2\delta \} }\right) & (\delta < 1).
\end{array} \right.
\end{align*}
Since $\min \{ 2 - 2\beta, 2 - 2\delta \} \geq 0$ holds for $\delta < 1$, we see that \eqref{conde} holds for $\delta > 1$. 

This completes the proof.
\end{proof}

Theorem~\ref{thm5} implies that sufficient conditions for the consistency of $\hat{\boldsymbol{\gamma}}_1(w)$ with $0<w<1$ is 
(i) weaker than $\hat{\boldsymbol{\gamma}}_1(1)$ for $\beta < 0$ (weak identifiability regimes) and the same as $\hat{\boldsymbol{\gamma}}_1(1)$ 
otherwise, 
(ii) weaker than $\hat{\boldsymbol{\gamma}}_1(0)$ for $\beta > 0.5$ (strongly spiked eigenvalue regimes) 
but stronger than $\hat{\boldsymbol{\gamma}}_1(0)$ for $\beta < 0.5$ (weakly spiked eigenvalue or weak identifiability regimes). 
We see that if $\beta > 0.5$ (strongly spiked eigenvalue regimes), then the consistency of $\hat{\boldsymbol{\gamma}}_1(w)$ with $0<w<1$ remains valid even for ${p}/{n} \to \infty$ as long as ${p^{2-2\beta}}/{n} \to 0$ holds. 
We also see that, even for $\beta < 0$ (weak identifiability regimes), we have the consistency of $\hat{\boldsymbol{\gamma}}_1(w)$ with $0<w<1$ if ${p^2}/{n} \to 0$.

Next theorem provides a sufficient condition for the consistency up to sign of $\hat{\boldsymbol{\gamma}}_1(w^*)$.

\begin{theorem}\label{thm6}
Let Assumption~\ref{ass2} hold.
Then, as $p$ and $n$ tend to $\infty$ satisfying that ${n}/{p^{\delta}}$ converges to a finite positive value  with $q$ held fixed, a sufficient condition for \eqref{condea} is given by
\[ \delta > \min \{ 2 - 2\beta, 1 \}. \]
\end{theorem}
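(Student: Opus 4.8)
The plan is to combine the non-asymptotic bound of Theorem~\ref{thm1} with the optimality of $w^*$. By Theorem~\ref{thm2} (and the computation $w^* < 2/3$ following it), $w^*$ lies in $[0,1]$ and minimizes the right-hand side of \eqref{UP} over $w \in [0,1]$. Applying Theorem~\ref{thm1} at $w = w^*$, it follows that for every admissible pair $(p,n)$ and every \emph{fixed} $w_0 \in [0,1]$,
\[
\E \left[ \min_{\theta = -1,1} \| \theta \hat{\boldsymbol{\gamma}}_1(w^*) - \boldsymbol{\gamma}_1 \|^2 \right]
\le \left. \frac{8 a \{ q(1-2w) + (n-1)w^2 \} + 16 bc (1-w)^2}{\left[ d \{ q + (n-1-2q)w \} + c(1-w) \right]^2} \right|_{w = w_0},
\]
with $a,b,c,d$ as in \eqref{abcd}. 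Hence it suffices to produce, under the hypothesis $\delta > \min\{2-2\beta,1\}$, one fixed $w_0 \in [0,1]$ for which the right-hand side converges to $0$; the proof of Theorem~\ref{thm5} already records the asymptotic order of this quantity for $w_0 \in \{0,1/2\}$.

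I would split into two cases according to the size of $\delta$. If $\delta > 1$, take $w_0 = 0$: by the $w = 0$ computation in the proof of Theorem~\ref{thm5} the bound is $\Theta(p^{1-\delta}) \to 0$, so \eqref{condea} holds. If $\delta \le 1$, then $\delta > \min\{2-2\beta,1\}$ cannot hold with $\min\{2-2\beta,1\} = 1$ (this would force $\delta > 1$); hence $2-2\beta < 1$, i.e., $\beta > 1/2$, and $\delta > 2-2\beta$. Take $w_0 = 1/2$: since $\beta > 1/2 > 0$ we have $\min\{2-2\beta,2\} = 2-2\beta < \delta$, so the $0 < w < 1$ computation in the proof of Theorem~\ref{thm5} gives the bound $\Theta(p^{\min\{2-2\beta-\delta,\,2-\delta\}}) = \Theta(p^{2-2\beta-\delta}) \to 0$, so \eqref{condea} holds. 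The two cases jointly exhaust $\delta > \min\{2-2\beta,1\}$, which completes the argument.

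A more computational alternative would be to read off the asymptotic order of $w^*$ from \eqref{w*} in each sub-regime of $(\beta,\delta)$ and substitute it directly into \eqref{UP}, in the spirit of the proof of Theorem~\ref{thm4}; this works too but multiplies the case analysis. I expect the only real obstacle to be bookkeeping: one must verify that the single scalar hypothesis $\delta > \min\{2-2\beta,1\}$ always implies one of the two relevant sufficient conditions of Theorem~\ref{thm5}, namely $\delta > 1$ when $w_0 = 0$ and $\delta > \min\{2-2\beta,2\}$ when $w_0 = 1/2$. No estimate beyond those of Theorems~\ref{thm1}, \ref{thm2} and \ref{thm5} is needed.
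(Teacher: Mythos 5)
Your argument is correct, and it takes a genuinely different route from the paper. The paper proves Theorem~\ref{thm6} by explicitly determining the asymptotic order of $w^*$ from \eqref{w*} in each sub-regime (for $\delta\le 1$, $w^*=\Theta(1)$ when $\beta\ge\delta$ and $w^*=\Theta(p^{\beta-\delta})$ when $\beta<\delta$; for $\delta>1$, $w^*=\Theta(p^{\beta-1})$), substituting these orders into the bound \eqref{bouabcd}, and working through a three-way case split with further sub-cases on $\beta$ versus $1/2$ and $\delta$ versus $2\beta$. You bypass all of that by exploiting the defining property of $w^*$: since Theorem~\ref{thm1} applies at $w=w^*$ (a deterministic value in $[0,2/3)\subset[0,1]$) and Theorem~\ref{thm2} says $w^*$ minimizes the right-hand side of \eqref{UP} over $[0,1]$, the mean squared error of $\hat{\boldsymbol{\gamma}}_1(w^*)$ is dominated by the bound evaluated at \emph{any} fixed $w_0\in[0,1]$, so it suffices to exhibit one $w_0$ per regime for which Theorem~\ref{thm5} already gives convergence ($w_0=0$ when $\delta>1$, $w_0=1/2$ when $\delta\le1$, in which case the hypothesis forces $\beta>1/2$ and $\delta>2-2\beta$). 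Your case split does exhaust the hypothesis $\delta>\min\{2-2\beta,1\}$, and the two invocations of Theorem~\ref{thm5} are legitimate. What your approach buys is brevity and robustness to bookkeeping errors; what the paper's approach buys is the explicit rate of the upper bound at $w^*$ in each regime (e.g.\ $\Theta(p^{-\delta-2\beta+2})$ or $\Theta(p^{-\delta+1})$), which is slightly sharper information than mere convergence to zero, though it is not needed for the statement as given.
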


\begin{proof}
In this proof, we use the notation \eqref{abcd} for simplicity.
Since
\begin{align*}
w^* 
&= \frac{a d q + 2 bcd }{2 ad q  +2 bcd +ac } \\
&= \frac{\Theta(p^{2+\beta})+\Theta_{p}(p^{1+\beta+\delta})}
{\Theta_{p}(p^{2+\beta})+\Theta_{p}(p^{1+\beta+\delta})+\Theta_{p}(p^{2+\delta})}
= \frac{\Theta(p^{\max \{ 2+\beta, 1+\beta+\delta \}})}
{\Theta_{p}(p^{\max \{ 2+\beta, 2+\delta \}})},
\end{align*}
we notice that, 
for $\delta \le 1$, 
$w^* = \Theta(1)$ under $\beta \ge \delta$ and 
$w^* = \Theta(p^{\beta-\delta})$ under $\beta < \delta$. 
For $\delta > 1$, $w^* = \Theta(p^{\beta-1})$. 
From $0\le w^* < 2/3$, it follows that $1-w^* = \Theta(1)$. 
We also note that \eqref{bouabcd} is equal to
\[
\frac{8 a \{ q(1 - w^*)^2 + (n-1-q)(w^*)^2 \} + 16 bc (1-w^*)^2 }
{ \left[ d \{ q(1-w^*) + (n-1-q)w^* \} + c (1-w^*) \right]^2 }.
\]

First, we discuss the case of $\delta > 1$. 
It holds that
\begin{align*}
& \frac{8 a \{ q(1 - w^*)^2 + (n-1-q)(w^*)^2 \} + 16 bc (1-w^*)^2 }
{ \left[ d \{ q(1-w^*) + (n-1-q)w^* \} + c (1-w^*) \right]^2 } \\
&= \frac{ \Theta(p^{\max\{ 0, \delta + 2\beta -2 \} })  \Theta(p^2) + \Theta(p^{\delta +1 }) }
{ \left[ \Theta(p^{\max\{ 0, \delta +\beta -1 \} }) \Theta(p^{\beta}) + \Theta(p^{\delta}) \right]^2 } 
= \frac{ \Theta(p^{\max\{ 2, \delta + 1, \delta + 2\beta  \} }) }
{ \left( \Theta(p^{\max\{ \delta, \beta, \delta +2\beta -1 \} }) \right)^2 } \\
&= \frac{ \Theta(p^{\max\{ \delta + 1, \delta + 2\beta  \} }) }
{ \left( \Theta(p^{\max\{ \delta, \delta +2\beta -1 \} }) \right)^2 }.
\end{align*}
If $\beta > 1/2$, then
\[
\frac{ \Theta(p^{\max\{ \delta + 1, \delta + 2\beta  \} }) }
{ \left( \Theta(p^{\max\{ \delta, \delta +2\beta -1 \} }) \right)^2 } 
= \frac{ \Theta(p^{\delta + 2\beta}) }
{ \left( \Theta(p^{\delta +2\beta -1}) \right)^2 } 
= \frac{ \Theta(p^{\delta + 2\beta}) }
{ \Theta(p^{2\delta +4\beta -2}) }
= \Theta(p^{-\delta - 2\beta +2}),
\]
which implies \eqref{condea} because $\delta > 1$ and $\beta > 1/2$ ensure $-\delta - 2\beta +2 < 0$. 
If $\beta \le 1/2$, then
\[
\frac{ \Theta(p^{\max\{ \delta + 1, \delta + 2\beta  \} }) }
{ \left( \Theta(p^{\max\{ \delta, \delta +2\beta -1 \} }) \right)^2 } 
= \frac{ \Theta(p^{\delta + 1}) }
{ \left( \Theta(p^{\delta }) \right)^2 } 
= \frac{ \Theta(p^{\delta + 1}) }
{ \Theta(p^{2\delta }) } 
= \Theta(p^{-\delta + 1}),
\]
which also implies \eqref{condea}.

Next, we deal with the case of $\delta \le 1$ and $\beta < \delta$.
It holds that
\begin{align*}
& \frac{8 a \{ q(1 - w^*)^2 + (n-1-q)(w^*)^2 \} + 16 bc (1-w^*)^2 }{ \left[ d \{ q(1-w^*) + (n-1-q)w^* \} + c (1-w^*) \right]^2 } \\
&= \frac{\Theta(p^{\max\{ 0, -\delta + 2\beta \} }) \Theta(p^2) + \Theta(p^{\delta + 1}) }
{ \left( \Theta(p^{\max\{ 0, \beta \}}) \Theta(p^{\beta}) + \Theta(p^{\delta}) \right)^2 } 
= \frac{\Theta(p^{\max\{ 2, -\delta + 2\beta + 2\} }) + \Theta(p^{\delta + 1}) }
{ \left( \Theta(p^{\max\{ \beta, 2\beta \}})  + \Theta(p^{\delta}) \right)^2 } \\
&= \frac{\Theta(p^{\max\{ 2, -\delta + 2\beta + 2\} }) }
{ \left( \Theta(p^{\max\{ \delta, 2\beta \}}) \right)^2 }.
\end{align*}
If $\delta > 2\beta$, then
\[
\frac{\Theta(p^{\max\{ 2, -\delta + 2\beta + 2\} }) }{ \left( \Theta(p^{\max\{ \delta, 2\beta \}}) \right)^2 }
= \frac{\Theta(p^{2}) }{ \left( \Theta(p^{\delta}) \right)^2 } 
= \frac{\Theta(p^{2}) }{ \Theta(p^{2\delta}) } 
= \Theta(p^{-2\delta + 2}),
\]
from which we see that \eqref{condea} does not necessarily hold in this case. 
On the other hand, if $\beta < \delta < 2\beta$, then
\[
\frac{\Theta(p^{\max\{ 2, -\delta + 2\beta + 2\} }) }
{ \left( \Theta(p^{\max\{ \delta, 2\beta \}}) \right)^2 } 
= \frac{\Theta(p^{-\delta + 2\beta + 2}) }
{ \left( \Theta(p^{2\beta}) \right)^2 } 
= \frac{\Theta(p^{-\delta + 2\beta + 2}) }
{ \Theta(p^{4\beta}) }
= \Theta(p^{-\delta -2\beta +2}),
\]
which indicates that \eqref{condea} holds if $\delta > 2 - 2\beta$. 

Finally, we discuss the case of $\delta \le 1$ and $\beta \ge \delta$. 
It holds that
\begin{align*}
& \frac{8 a \{ q(1 - w^*)^2 + (n-1-q)(w^*)^2 \} + 16 bc (1-w^*)^2 }{ \left[ d \{ q(1-w^*) + (n-1-q)w^* \} + c (1-w^*) \right]^2 } \\
&= \frac{\Theta(p^\delta) \Theta(p^2) + \Theta(p^{\delta + 1}) }{ \left( \Theta(p^\delta) \Theta(p^{\beta}) + \Theta(p^{\delta}) \right)^2 } 
= \frac{\Theta(p^{\delta + 2}) }{ \left( \Theta(p^{\delta + \beta}) \right)^2 } 
= \frac{\Theta(p^{\delta + 2}) }{ \Theta(p^{2\delta + 2\beta}) }
= \Theta(p^{-\delta - 2\beta + 2}),
\end{align*}
from which we see that \eqref{condea} holds if $\delta > 2 - 2\beta$. 

From what has already been proved, noting that $2\beta > 2 - \delta$ is not satisfied under $\delta \le 1$ and $\delta > 2\beta$ ($2\beta < \delta \le 2- \delta$), we conclude that \eqref{condea} always holds under $\delta > 1$ and that \eqref{condea} holds for $\delta > 2 - 2\beta$ under $\delta \le 1$.
\end{proof}

We see that, as for sufficient conditions for the consistency, $\hat{\boldsymbol{\gamma}}_1(w^*)$ has the good points both of $\hat{\boldsymbol{\gamma}}_1(w)$ with $0<w<1$ and $\hat{\boldsymbol{\gamma}}_1(0)$. 
We think that this may be because the inequality $\| \cdot \|_{\mathrm{op}} \leq \| \cdot \|_{\mathrm{F}}$ is used, so sufficient conditions in Theorem~\ref{thm5} may be relaxed by avoiding this inequality.

\section{Feasible $\hat{\boldsymbol{\gamma}}_1(w^*)$ estimator}\label{sec:6}

Since $w^*$ includes unknown parameters, in order to use $\hat{\boldsymbol{\gamma}}_1(w^*)$, $w^*$ needs to be estimated.
In this section, $n > 2+ q$ is additionally assumed.
Then, a natural estimator of $w^*$ is given by
\[ \hat{w}^* 
= \frac{ \hat{a}\hat{d}q + 2 \hat{b} \hat{c} \hat{d} }{ 2 \hat{a} \hat{d} q +2 \hat{b} \hat{c} \hat{d} + \hat{a} \hat{c}}.
\]
where
\begin{align*}
&\hat{a} = \widehat{{\rm tr}( \boldsymbol{\Sigma}^2 )} + ({\rm tr}(\hat{\boldsymbol{\Sigma}}))^2, \
\hat{b} = \hat{\lambda}_1 + {\rm tr}(\hat{\boldsymbol{\Sigma}}), \
\hat{c} = {\rm tr}( \boldsymbol{S}_R ) - q  {\rm tr}(\hat{\boldsymbol{\Sigma}}), \ 
\hat{d} = \hat{\lambda}_1 - \hat{\lambda}_2, \\
& \hat{\boldsymbol{\Sigma}} = \frac{\boldsymbol{S}_E}{n-1-q}, \
\widehat{{\rm tr}( \boldsymbol{\Sigma}^2 )} = \frac{1}{(n+1-q)(n-2-q)} \left\{ {\rm tr}(\boldsymbol{S}_{E}^{2}) - \frac{\left( {\rm tr}(\boldsymbol{S}_E)\right)^2}{n-1-q}
\right\},  
\end{align*}
and $\hat{\lambda}_1$ and $\hat{\lambda}_2$ are the largest and second largest eigenvalues of $\hat{\boldsymbol{\Sigma}}$, respectively. 
When $\boldsymbol{\alpha} \neq \boldsymbol{0}_q$, noting that
\[ \E[{\rm tr}(\boldsymbol{S}_R)] =   q {\rm tr}( \boldsymbol{\Lambda} ) + \| \boldsymbol{X} \boldsymbol{\alpha} \|^2 , \quad
\V[{\rm tr}(\boldsymbol{S}_R)] =  2q {\rm tr}( \boldsymbol{\Lambda}^2) + 4 \lambda_1 \| \boldsymbol{X} \boldsymbol{\alpha} \|^2 \]
and that
$\boldsymbol{S}_E \sim \mathcal{W}_p(n-1-q, \boldsymbol{\Sigma})$, we can see that $\hat{w}^* - w^* \stackrel{\mathrm{P}}{\to} 0$ under some asymptotic regimes such as a traditional large sample regime where $n \to \infty$ with the assumption that $n^{-1} \boldsymbol{X}' \boldsymbol{X}$ converges to a positive-definite matrix while other parameters are fixed.
Neither the consistency of $\hat{\boldsymbol{\gamma}}_1(w^*)$ nor $\hat{w}^*$ necessarily means that a feasible estimator $\hat{\boldsymbol{\gamma}}_1(\hat{w}^*)$ is consistent up to sign, that is to say,
\begin{equation}\label{condef}
\min_{\theta = -1,1} \| \theta \hat{\boldsymbol{\gamma}}_1(\hat{w}^*) - \boldsymbol{\gamma}_1 \| \stackrel{\mathrm{P}}{\to} 0. 
\end{equation}
Hence, let us discuss asymptotic regimes under which $\hat{w}^* - w^* \stackrel{\mathrm{P}}{\to} 0$ implies \eqref{condef}.

\begin{theorem}\label{thm7}
Under asymptotic regimes satisfying
\begin{align}
& \frac{ \{ q (1 - w^*)^2 + (n-1 -q)(w^*)^2 \} \{ {\rm tr}( \boldsymbol{\Sigma}^2 ) 
+ ({\rm tr}(\boldsymbol{\Sigma}))^2 \} + (\lambda_1 + {\rm tr}(\boldsymbol{\Sigma})) \| \boldsymbol{X} \boldsymbol{\alpha} \|^2 }
{ \left[ \{ q + (n-1-2q)w^* \}(\lambda_1 - \lambda_2) + \| \boldsymbol{X} \boldsymbol{\alpha} \|^2 \right]^2 } \nonumber \\
&\to 0 \label{con2}
\end{align}
and
\begin{align}
& \frac{
(n-1-2q)^2 {\rm tr}(\boldsymbol{\Sigma}^2) + (n-1) \{{\rm tr}(\boldsymbol{\Sigma}^2) + ({\rm tr} (\boldsymbol{\Sigma}) )^2 \}  - (n-2q) \lambda_1  \| \boldsymbol{X} \boldsymbol{\alpha} \|^2 }{ \left[ \{q+(n-1-2q) w^*\}(\lambda_1 - \lambda_2) + \| \boldsymbol{X} \boldsymbol{\alpha} \|^2 \right]^2 }  \nonumber \\
& = O(1) , \label{con1}
\end{align}
if $\hat{w}^* - w^* \stackrel{\mathrm{P}}{\to} 0$, then \eqref{condef} holds.
\end{theorem}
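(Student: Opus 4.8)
The plan is to regard $\hat{\boldsymbol{\gamma}}_1(\hat{w}^*)$ as the leading eigenvector of a perturbation of $\E[\boldsymbol{S}(w^*)]$, to split that perturbation into a piece already controlled through Lemma~\ref{Lem1} and a piece proportional to $\hat{w}^*-w^*$, and to show each vanishes in probability after dividing by the population eigengap. First I would write $g^*=\lambda_1(w^*)-\lambda_2(w^*)=\{q+(n-1-2q)w^*\}(\lambda_1-\lambda_2)+(1-w^*)\|\boldsymbol{X}\boldsymbol{\alpha}\|^2$ for the eigengap of $\E[\boldsymbol{S}(w^*)]$; by the computation in the proof of Theorem~\ref{thm1}, $g^*>0$ and $\boldsymbol{\gamma}_1$ is the leading eigenvector of $\E[\boldsymbol{S}(w^*)]$, and since $0\le w^*<2/3$ forces $1-w^*\in(1/3,1]$, the quantity $(g^*)^2$ is bounded above and below by positive multiples of $[\{q+(n-1-2q)w^*\}(\lambda_1-\lambda_2)+\|\boldsymbol{X}\boldsymbol{\alpha}\|^2]^2$, the denominator appearing in \eqref{con2} and \eqref{con1}. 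Because $\boldsymbol{S}(w)=(1-w)\boldsymbol{S}_R+w\boldsymbol{S}_E$ is affine in $w$, one has
\[
\boldsymbol{S}(\hat{w}^*)-\E[\boldsymbol{S}(w^*)]=\bigl\{\boldsymbol{S}(w^*)-\E[\boldsymbol{S}(w^*)]\bigr\}+(\hat{w}^*-w^*)(\boldsymbol{S}_E-\boldsymbol{S}_R),
\]
so applying the Davis--Kahan bound exactly as in the proof of Theorem~\ref{thm1} (with $\boldsymbol{S}(\hat{w}^*)$ in place of $\boldsymbol{S}(w)$), then the triangle inequality and $\|\cdot\|_{\rm op}\le\|\cdot\|_{\rm F}$, gives
\[
\min_{\theta=-1,1}\|\theta\hat{\boldsymbol{\gamma}}_1(\hat{w}^*)-\boldsymbol{\gamma}_1\|\le\frac{2\sqrt{2}}{g^*}\Bigl(\|\boldsymbol{S}(w^*)-\E[\boldsymbol{S}(w^*)]\|_{\rm F}+|\hat{w}^*-w^*|\,\|\boldsymbol{S}_E-\boldsymbol{S}_R\|_{\rm F}\Bigr).
\]
It then suffices to show that each summand on the right, divided by $g^*$, tends to $0$ in probability. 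The first summand is immediate: Lemma~\ref{Lem1} at $w=w^*$, together with $q(1-2w^*)+(n-1)(w^*)^2=q(1-w^*)^2+(n-1-q)(w^*)^2$ and $1-w^*\in(1/3,1]$, bounds $\E[\|\boldsymbol{S}(w^*)-\E[\boldsymbol{S}(w^*)]\|_{\rm F}^2]$ by a universal constant times the numerator of the ratio in \eqref{con2}, so dividing by $(g^*)^2$ and invoking \eqref{con2} makes this expectation over $(g^*)^2$ tend to $0$, and Markov's inequality finishes this part.

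For the second summand, $|\hat{w}^*-w^*|\stackrel{\mathrm{P}}{\to}0$ is assumed, so it is enough to prove $\|\boldsymbol{S}_E-\boldsymbol{S}_R\|_{\rm F}/g^*=O_{\mathrm{P}}(1)$, which by Markov's inequality follows from $\E[\|\boldsymbol{S}_E-\boldsymbol{S}_R\|_{\rm F}^2]/(g^*)^2=O(1)$. Here I would use the independence of $\boldsymbol{S}_R$ and $\boldsymbol{S}_E$, the decomposition $\E\|Z\|_{\rm F}^2=\|\E Z\|_{\rm F}^2+\E\|Z-\E Z\|_{\rm F}^2$, the formulas for $\E[\boldsymbol{S}_R]$ and $\E[\boldsymbol{S}_E]$ from Section~\ref{sec:2}, the trace moments $\E[{\rm tr}((\boldsymbol{S}_R-\E[\boldsymbol{S}_R])^2)]$ and $\E[{\rm tr}((\boldsymbol{S}_E-\E[\boldsymbol{S}_E])^2)]$ computed inside the proof of Lemma~\ref{Lem1}, and the identities $\boldsymbol{\gamma}_1'\boldsymbol{\Sigma}\boldsymbol{\gamma}_1=\lambda_1$ and $\|\boldsymbol{\gamma}_1\boldsymbol{\gamma}_1'\|_{\rm F}^2=1$, to obtain
\[
\E[\|\boldsymbol{S}_E-\boldsymbol{S}_R\|_{\rm F}^2]=N+\|\boldsymbol{X}\boldsymbol{\alpha}\|^4+2\,{\rm tr}(\boldsymbol{\Sigma})\,\|\boldsymbol{X}\boldsymbol{\alpha}\|^2-(n-4-2q)\,\lambda_1\,\|\boldsymbol{X}\boldsymbol{\alpha}\|^2,
\]
where $N$ denotes the numerator of the ratio in \eqref{con1}. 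Dividing by $(g^*)^2$: the term $N/(g^*)^2$ is $O(1)$ by \eqref{con1}; the term $\|\boldsymbol{X}\boldsymbol{\alpha}\|^4/(g^*)^2$ is bounded since $g^*\ge(1-w^*)\|\boldsymbol{X}\boldsymbol{\alpha}\|^2\ge\|\boldsymbol{X}\boldsymbol{\alpha}\|^2/3$; the term $2\,{\rm tr}(\boldsymbol{\Sigma})\|\boldsymbol{X}\boldsymbol{\alpha}\|^2/(g^*)^2$ tends to $0$ because it is dominated by $(\lambda_1+{\rm tr}(\boldsymbol{\Sigma}))\|\boldsymbol{X}\boldsymbol{\alpha}\|^2/(g^*)^2$, which is a term of the ratio in \eqref{con2} up to a constant factor; and $-(n-4-2q)\lambda_1\|\boldsymbol{X}\boldsymbol{\alpha}\|^2\le0$ once $n$ is large, so it only improves the upper bound. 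Hence $\E[\|\boldsymbol{S}_E-\boldsymbol{S}_R\|_{\rm F}^2]/(g^*)^2=O(1)$, so $\|\boldsymbol{S}_E-\boldsymbol{S}_R\|_{\rm F}/g^*=O_{\mathrm{P}}(1)$, and multiplying by $|\hat{w}^*-w^*|=o_{\mathrm{P}}(1)$ makes the second summand $o_{\mathrm{P}}(1)$. Combining with the first part establishes \eqref{condef}.

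The step I expect to be the main obstacle is the exact evaluation of $\E[\|\boldsymbol{S}_E-\boldsymbol{S}_R\|_{\rm F}^2]$ and the recognition that its dominant part is precisely the numerator $N$ of \eqref{con1}, while the residual term $2\,{\rm tr}(\boldsymbol{\Sigma})\|\boldsymbol{X}\boldsymbol{\alpha}\|^2$ must be handled by \eqref{con2} rather than \eqref{con1} --- this is exactly why both conditions are imposed rather than a single one. Everything else is a routine Davis--Kahan plus Markov argument, structurally parallel to the proof of Theorem~\ref{thm1}.
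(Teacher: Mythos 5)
Your proposal is correct and follows essentially the same route as the paper's own proof: the same affine decomposition $\boldsymbol{S}(\hat{w}^*)-\E[\boldsymbol{S}(w^*)]=\{\boldsymbol{S}(w^*)-\E[\boldsymbol{S}(w^*)]\}+(\hat{w}^*-w^*)(\boldsymbol{S}_E-\boldsymbol{S}_R)$, Davis--Kahan with the Frobenius bound, Lemma~\ref{Lem1} for the first term via \eqref{con2}, and the exact second moment of $\|\boldsymbol{S}_R-\boldsymbol{S}_E\|_{\rm F}$ (whose dominant part is the numerator of \eqref{con1}, with the $c^2$ and $2\,{\rm tr}(\boldsymbol{\Sigma})c$ remainders absorbed exactly as the paper does). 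The only cosmetic difference is that you invoke Markov on the squared Frobenius norm where the paper uses Jensen on the operator norm; the content is identical.
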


\begin{proof}
In this proof, we use the notation \eqref{abcd} for simplicity.
Since $\boldsymbol{\gamma}_1$ is the first principal eigenvector of $\E[\boldsymbol{S}(w^*)] $ and since
\begin{align*}
\| \boldsymbol{S}(\hat{w}^*) - \E[ \boldsymbol{S}(w)] \|_{\rm op}
& \leq \| \boldsymbol{S}(\hat{w}^*) - \boldsymbol{S}(w^*) \|_{\rm op} + \| \boldsymbol{S}(w^*) - \E[ \boldsymbol{S} (w^*)] \|_{\rm op} \\
& = |\hat{w}^* - w^*| \| \boldsymbol{S}_R - \boldsymbol{S}_E \|_{\rm op} + \| \boldsymbol{S}(w^*) - \E[ \boldsymbol{S} (w^*)] \|_{\rm op},
\end{align*}
we have, by using the Davis--Kahan theorem,
\begin{align*}
\sin  \angle(\boldsymbol{\gamma}_1, \hat{\boldsymbol{\gamma}}_1(\hat{w}^*)) 
& \le \frac{2 \| \boldsymbol{S}(\hat{w}^*) - \E[\boldsymbol{S}(w^*)] \|_{{\rm op}}}{ \lambda_1(w^*) - \lambda_2(w^*) } \\
& \le
2|\hat{w}^* - w^*|  \frac{ \| \boldsymbol{S}_R - \boldsymbol{S}_E \|_{\rm op} }{\lambda_1(w^*) - \lambda_2(w^*)}
+ 2 \frac{\| \boldsymbol{S}(w^*) - \E[ \boldsymbol{S} (w^*)] \|_{\rm op}}{\lambda_1(w^*) - \lambda_2(w^*)} .
\end{align*}
From \eqref{con2}, the second term on the right-hand side converges to 0 in probability.
Thus, it suffices to show that
\[
\frac{ \E[ \| \boldsymbol{S}_R - \boldsymbol{S}_E \|_{\rm op} ] }{\lambda_1(w^*) - \lambda_2(w^*)} = O(1)
\]
because $\hat{w}^* - w^* \stackrel{\mathrm{P}}{\to} 0$ is assumed.
Since the Jensen inequality provides 
\[ \E[\| \boldsymbol{S}_R - \boldsymbol{S}_E \|_{\rm op}] \le \sqrt{\E[\| \boldsymbol{S}_R - \boldsymbol{S}_E \|_{\rm op}^2]} , \] 
we evaluate $\E[ \| \boldsymbol{S}_R - \boldsymbol{S}_E \|^2_{\rm op} ]$.
It follows from
\begin{align*}
& \| \boldsymbol{S}_R - \boldsymbol{S}_E \|_{\mathrm{op}}^2  \\
&\leq \| \boldsymbol{S}_R - \boldsymbol{S}_E \|_{\rm F}^2 
 ={\rm tr} \Bigl( (\boldsymbol{S}_R - \boldsymbol{S}_E)(\boldsymbol{S}_R - \boldsymbol{S}_E)^\top \Bigr) \\
& = 
{\rm tr}\Bigl( (\boldsymbol{S}_R - \E[\boldsymbol{S}_R])^2\Bigr) 
+ {\rm tr}\Bigl( (\boldsymbol{S}_E - \E[\boldsymbol{S}_E])^2\Bigr) 
+  {\rm tr}\Bigl( (\E[\boldsymbol{S}_R - \boldsymbol{S}_E])^2\Bigr) \\ 
& \quad
- 2 {\rm tr}\Bigl( (\boldsymbol{S}_R - \E[\boldsymbol{S}_R]) (\boldsymbol{S}_E - \E[\boldsymbol{S}_E]) \Bigr)
- 2 {\rm tr}\Bigl( (\boldsymbol{S}_R - \E[\boldsymbol{S}_R]) (E[\boldsymbol{S}_R - \boldsymbol{S}_E]) \Bigr)  \\
& \quad
- 2 {\rm tr}\Bigl( (\boldsymbol{S}_E - \E[\boldsymbol{S}_E]) (E[\boldsymbol{S}_R - \boldsymbol{S}_E]) \Bigr),
\end{align*}
that
\begin{align*}
&\E[\| \boldsymbol{S}_R - \boldsymbol{S}_E \|_{\mathrm{op}}^2] \\
&\leq 
\E \left[ {\rm tr}\left( (\boldsymbol{S}_R - \E[\boldsymbol{S}_R])^2 \right)  \right]
+ \E \left[  {\rm tr} \left( (\boldsymbol{S}_E - \E[\boldsymbol{S}_E])^2 \right) \right]
+  {\rm tr} \left( (\E[\boldsymbol{S}_R - \boldsymbol{S}_E])^2 \right). 
\end{align*}
Here, recall that
\[
\E \left[ {\rm tr} \left( \left( \boldsymbol{S}_R - E[\boldsymbol{S}_R] \right)^2 \right) \right] 
=  a q + 2bc\]
and that
\[
\E \left[ {\rm tr} \left( \left( \boldsymbol{S}_E - E[\boldsymbol{S}_E] \right)^2 \right) \right]
= a (n-1-q).
\]
Moreover, it holds that
\begin{align*}
{\rm tr} \left( (\E[\boldsymbol{S}_R - \boldsymbol{S}_E])^2 \right)
&=  {\rm tr} \left( \left\{ - (n-1-2q) \boldsymbol{\Gamma} \boldsymbol{\Lambda} \boldsymbol{\Gamma}' + c \boldsymbol{\gamma}_1 \boldsymbol{\gamma}_1' \right\}^2 \right) \\
&= \left( n-1-2q\right)^2 {\rm tr}(\boldsymbol{\Sigma}^2) + c^2 - 2 c (n-1-2q) \lambda_1 .
\end{align*}
Hence, we have
\begin{align*}
&\E[\| \boldsymbol{S}_R - \boldsymbol{S}_E \|_{\rm op}] \\
&\leq \sqrt{(n-1-2q)^2 {\rm tr}(\boldsymbol{\Sigma}^2) + a (n-1) + c^2 + 2 c \{ b - (n-1-2q) \lambda_1 \} }.
\end{align*}
Noting that 
\[ w^* < 2/3, \ \lambda_1(w^*) - \lambda_2(w^*) 
= d \{q+(n-1-2q) w^*\} + c (1-w^*), \]
and
\begin{align*}
&\frac{(n-1-2q)^2 {\rm tr}(\boldsymbol{\Sigma}^2) + a (n-1) + c^2 + \{ b - (n-1-2q) \lambda_1 \} c }{ \left[ d \{q+(n-1-2q) w^*\} + c (1 - w^*) \right]^2 } \\
&
= \frac{(n-1-2q)^2 {\rm tr}(\boldsymbol{\Sigma}^2) + a (n-1) + \{ b - (n-1-2q) \lambda_1 \}  c}{ \left[ \{q+(n-1-2q) w^*\}d + c (1 - w^*) \right]^2 } + O(1)
\\ 
&
= \frac{(n-1-2q)^2 {\rm tr}(\boldsymbol{\Sigma}^2) + a (n-1) - c (n-2q) \lambda_1 }{ \left[ \{q+(n-1-2q) w^*\}d + c (1 - w^*)  \right]^2 } + o(1) + O(1) ,
\end{align*}
we have the conclusion.
\end{proof}

\begin{remark}
The estimator $\hat{w}^*$ in Theorem~\ref{thm7} can be replaced with arbitrary estimator $\hat{w}$ satisfying that $\hat{w} - w^* \stackrel{\mathrm{P}}{\to} 0$.
\end{remark}

\begin{remark}
The condition \eqref{con1} can be improved by determining the rate of convergence of $\hat{w}^*$:
under asymptotic regimes satisfying \eqref{con2} and
\[
\frac{ (n-1-2q)^2 {\rm tr}(\boldsymbol{\Sigma}^2) + a (n-1)  + c \{ {\rm tr}(\boldsymbol{\Sigma}) - (n  -2q) \lambda_1 \}}{ \left[ d \{q+(n-1-2q) w^*\} + c \right]^2 } 
= o(r_{n}^2)
\]
for some non-decreasing sequence $\{ r_n \}_{n \in \mathbb{N}}$ of positive values $0<r_1 \leq r_2 \leq \cdots$ satisfying that $r_{n} (\hat{w}^* - w^*)$ is uniformly tight, the same conclusion follows.
Here, the notation \eqref{abcd} is used.
\end{remark}

Under the traditional large sample regime, \eqref{con2} and \eqref{con1} are satisfied, so we have the following corollary.

\begin{corollary}
Suppose that $\boldsymbol{\alpha} \neq \boldsymbol{0}_q$ and that $n^{-1} \boldsymbol{X}' \boldsymbol{X}$ converges to a positive-definite matrix as $n\to\infty$ with other parameters held fixed.
Then, as $n\to\infty$, \eqref{condef} holds.
\end{corollary}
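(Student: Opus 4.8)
The plan is to obtain the corollary as a direct application of Theorem~\ref{thm7}. Under that theorem it suffices to verify, for the traditional large sample regime, the three facts: (a) $\hat{w}^* - w^* \stackrel{\mathrm{P}}{\to} 0$; (b) condition \eqref{con2}; and (c) condition \eqref{con1}. Fact (a) has essentially been argued already in the text preceding the corollary: with $p$, $q$, $\boldsymbol{\Sigma}$, and $\boldsymbol{\alpha}$ held fixed and $n^{-1}\boldsymbol{X}'\boldsymbol{X}$ converging to a positive-definite matrix $\boldsymbol{M}$, the formulas for $\E[{\rm tr}(\boldsymbol{S}_R)]$ and $\V[{\rm tr}(\boldsymbol{S}_R)]$ recalled in Section~\ref{sec:6}, together with $\boldsymbol{S}_E \sim \mathcal{W}_p(n-1-q,\boldsymbol{\Sigma})$, imply via the weak law of large numbers and the continuous mapping theorem that $\hat{a} \stackrel{\mathrm{P}}{\to} a$, $\hat{b} \stackrel{\mathrm{P}}{\to} b$, $\hat{d} \stackrel{\mathrm{P}}{\to} d$, and $n^{-1}\hat{c} \stackrel{\mathrm{P}}{\to} \boldsymbol{\alpha}'\boldsymbol{M}\boldsymbol{\alpha} > 0$; feeding these into the explicit forms of $\hat{w}^*$ and $w^*$ gives $\hat{w}^* - w^* \stackrel{\mathrm{P}}{\to} 0$. (Here the assumption $n > 2+q$ of Section~\ref{sec:6} ensures $\hat{w}^*$ is well defined.)

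The rest is order bookkeeping in the notation \eqref{abcd}. Since $\boldsymbol{\Sigma}$ is fixed with $\lambda_1 > \lambda_2$, we have $a = \Theta(1)$, $b = \Theta(1)$, $d = \Theta(1)$; since $\boldsymbol{\alpha} \neq \boldsymbol{0}_q$ is fixed, $c = \|\boldsymbol{X}\boldsymbol{\alpha}\|^2 = \Theta(n)$. Plugging these into \eqref{w*}, both the numerator $adq + 2bcd$ and the denominator $2adq + 2bcd + ac$ of $w^*$ are $\Theta(n)$, and in fact $w^* \to 2bd/(2bd+a) \in (0, 2/3)$, so $w^*$ and $1 - w^*$ are both $\Theta(1)$ and hence $q + (n-1-2q)w^* = q(1-w^*) + (n-1-q)w^* = \Theta(n)$. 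For the left-hand side of \eqref{con2}, the numerator equals $\{q(1-w^*)^2 + (n-1-q)(w^*)^2\}\cdot\Theta(1) + \Theta(1)\cdot\Theta(n) = \Theta(n)$ and the denominator equals $[\Theta(n)\cdot\Theta(1) + \Theta(n)]^2 = \Theta(n^2)$, so the ratio is $\Theta(n^{-1}) \to 0$, giving \eqref{con2}. For the left-hand side of \eqref{con1}, the numerator equals $(n-1-2q)^2\,\Theta(1) + (n-1)\,\Theta(1) - (n-2q)\,\Theta(1)\cdot\Theta(n)$, each summand being $O(n^2)$, hence $O(n^2)$, while the denominator is again $\Theta(n^2)$, so the ratio is $O(1)$, giving \eqref{con1}. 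With (a), \eqref{con2}, and \eqref{con1} established, Theorem~\ref{thm7} yields \eqref{condef}.

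I do not expect a genuine obstacle. The only point needing a little care is that $w^*$ stays bounded away from $0$ and $1$ in the limit, since otherwise the $\Theta(n)$ contributions in the denominators of \eqref{con2} and \eqref{con1} could be killed off; this is precisely what the bound $0 \le w^* < 2/3$ proved after Theorem~\ref{thm2}, together with the computation $w^* \to 2bd/(2bd+a)$, guarantees. Making fact (a) fully rigorous is likewise routine, relying only on the explicit plug-in forms of $\hat{a}$, $\hat{b}$, $\hat{c}$, $\hat{d}$ and on standard moment identities for the Wishart distribution.
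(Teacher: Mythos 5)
Your proposal is correct and follows essentially the same route as the paper: the paper's justification of the corollary is precisely that, in the traditional large sample regime, $\hat{w}^*-w^*\stackrel{\mathrm{P}}{\to}0$ (asserted earlier in Section~\ref{sec:6} from the stated moment identities for ${\rm tr}(\boldsymbol{S}_R)$ and the Wishart law of $\boldsymbol{S}_E$) and conditions \eqref{con2} and \eqref{con1} hold, so Theorem~\ref{thm7} applies. Your order bookkeeping, including the observation that $w^*\to 2bd/(2bd+a)\in(0,2/3)$ so that $w^*$ and $1-w^*$ remain $\Theta(1)$, correctly fills in the details the paper leaves implicit.
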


The performance of $\hat{\boldsymbol{\gamma}}_1(\hat{w}^*)$, compared with $\hat{\boldsymbol{\gamma}}_1(w^*)$, is examined through numerical simulations in Section~\ref{sec:7}.

\section{Numerical simulations}\label{sec:7}

This section provides the results of some numerical simulations for comparing performances of the estimators $\hat{\boldsymbol{\gamma}}_1^{(T)} =\hat{\boldsymbol{\gamma}}_1 (0.5)$, $\hat{\boldsymbol{\gamma}}_1^{(E)} =\hat{\boldsymbol{\gamma}}_1(1)$, $\hat{\boldsymbol{\gamma}}_1^{(R)} =\hat{\boldsymbol{\gamma}}_1(0)$, $\hat{\boldsymbol{\gamma}}_1(\hat{w}^*)$, and $\hat{\boldsymbol{\gamma}}_1(w^*)$. 
For comparison, $\hat{\boldsymbol{\gamma}}_1(w)$ with $w=0.1,0.2,0.3,0.4,0.6$ are also included. 
The estimators $\hat{\boldsymbol{\gamma}}_1(w)$ with $w=0.7,0.8,0.9$ are not included since $w^* < 2/3$ is shown in Section 3.

First, we consider a traditional large sample regime and a weak identifiability regime, as follows. 
The number of response variables $p$ is 10, the number of explanatory variables $q$ is 5, and 
the sample size $n$ is $20,50,100,200,500$. 
The explanatory variables matrix $\boldsymbol{X}$ ($n \times q$ matrix) is a random sample of size $n$ drawn from 
$\mathcal{N}_q (\boldsymbol{0}_q,\boldsymbol{I}_q)$ with centralization ($\boldsymbol{X}'\boldsymbol{1}_n=\boldsymbol{0}_q$). 
Let $\boldsymbol{\alpha}=\boldsymbol{1}_q$ (the $q\times 1$ vector whose all elements are $1$). 
Let $\boldsymbol{\Sigma} = \boldsymbol{\Gamma}\boldsymbol{\Lambda}\boldsymbol{\Gamma}'$, where we set the diagonal matrix of eigenvalues $\boldsymbol{\Lambda} = {\rm diag} (\lambda_1,\lambda_2,\dots,\lambda_p)$ 
by $\lambda_1=2$ (traditional large sample regime) or $\lambda_1 = 1 + n^{-\eta}$ with $\eta = 1/3,1/2,1$ (weak identifiability regime) and by $\lambda_i=1, \ i=2,3,\dots,p$ and we also set the orthogonal matrix of eigenvectors $\boldsymbol{\Gamma}=(\boldsymbol{\gamma}_1,\boldsymbol{\gamma}_2,\dots,\boldsymbol{\gamma}_p)$ 
by using normalized eigenvectors of a sample covariance matrix of the random sample of size $2p$ drawn from $\mathcal{N}_p (\boldsymbol{0}_p,\boldsymbol{I}_p)$. 
We note that the setting of $\boldsymbol{\Gamma}$ does not affect the results of simulations. 
Table~\ref{tab:01} (traditional large sample regime) and Table~\ref{tab:02} (weak identifiability regime) show the results of mean squared errors of the estimators based on 1000 numerical simulations.

\begin{table}[tbp]
\begin{center}
\caption{
The values of the mean squared errors (with the average of $\hat{w}^*$ and $w^*$) in the case under a traditional large sample regime.}
\label{tab:01}

\begin{tabular}{cccccc}\hline
$n$ & 20 & 50 & 100 & 200 & 500 \\ \hline
$\hat{\boldsymbol{\gamma}}_1^{(T)}$& 0.10517 & 0.03734 & 0.01831 & 0.00862 & 0.00349 \\
$\hat{\boldsymbol{\gamma}}_1^{(E)}$& 0.90481 & 0.46512 & 0.22117 & 0.09505 & 0.03515 \\
$\hat{\boldsymbol{\gamma}}_1^{(R)}$& 0.10723 & 0.03887 & 0.01873 & 0.00886 & 0.00363 \\
$\hat{\boldsymbol{\gamma}}_1(0.1)$ & 0.10404 & 0.03740 & 0.01804 & 0.00852 & 0.00349 \\
$\hat{\boldsymbol{\gamma}}_1(0.2)$ & 0.10113 & 0.03613 & 0.01745 & 0.00823 & 0.00337 \\
$\hat{\boldsymbol{\gamma}}_1(0.3)$ & \textbf{0.09915} & \textbf{0.03531} & \textbf{0.01711} & \textbf{0.00806} & \textbf{0.00330} \\
$\hat{\boldsymbol{\gamma}}_1(0.4)$ & 0.09949 & 0.03540 & 0.01724 & 0.00812 & 0.00331 \\
$\hat{\boldsymbol{\gamma}}_1(0.6)$ & 0.12335 & 0.04309 & 0.02127 & 0.01000 & 0.00400 \\
$\hat{\boldsymbol{\gamma}}_1(\hat{w}^*)$ & 0.10101 & 0.03684 & 0.01782 & 0.00843 & 0.00344 \\
$\hat{w}^*$ & (0.19359) & (0.12519) & (0.11918) & (0.12590) & (0.13702) \\
$\hat{\boldsymbol{\gamma}}_1(w^*)$ & 0.10109 & 0.03642 & 0.01762 & 0.00832 & 0.00341 \\
$w^*$ & (0.19237) & (0.17417) & (0.16823) & (0.16535) & (0.16363) \\ \hline
\end{tabular}

\vspace{2.5truemm}
The best result up to significant digits in each setting is in bold face.
\end{center}
\end{table}

\begin{table}[tbp]
\begin{center}
\caption{The values of the mean squared errors (with the average of $\hat{w}^*$ and $w^*$) in the cases under weak identifiability regimes.}
\label{tab:02}

(a) Results when $\lambda_1 - \lambda_2 = 1/n^{1/3}$.\\
\begin{tabular}{cccccc}\hline
$n$ & 20 & 50 & 100 & 200 & 500 \\ \hline
$\hat{\boldsymbol{\gamma}}_1^{(T)}$& 0.11599 & 0.04301 & 0.02163 & 0.01051 & 0.00420 \\ 
$\hat{\boldsymbol{\gamma}}_1^{(E)}$& 1.25437 & 1.17177 & 1.13561 & 1.06038 & 0.97851 \\ 
$\hat{\boldsymbol{\gamma}}_1^{(R)}$& 0.10449 & 0.03806 & 0.01843 & 0.00890 & 0.00361 \\
$\hat{\boldsymbol{\gamma}}_1(0.1)$ & 0.10343 & 0.03771 & \textbf{0.01836} & \textbf{0.00888} & \textbf{0.00360} \\
$\hat{\boldsymbol{\gamma}}_1(0.2)$ & \textbf{0.10281} & \textbf{0.03759} & 0.01842 & 0.00893 & 0.00361 \\
$\hat{\boldsymbol{\gamma}}_1(0.3)$ & 0.10328 & 0.03796 & 0.01875 & 0.00911 & 0.00367 \\
$\hat{\boldsymbol{\gamma}}_1(0.4)$ & 0.10634 & 0.03936 & 0.01962 & 0.00954 & 0.00383 \\
$\hat{\boldsymbol{\gamma}}_1(0.6)$ & 0.14500 & 0.05212 & 0.02630 & 0.01272 & 0.00504 \\
$\hat{\boldsymbol{\gamma}}_1(\hat{w}^*)$ & 0.10313 & 0.03776 & 0.01838 & 0.00889 & 0.00361 \\
$\hat{w}^*$ & (0.17068) & (0.07585) & (0.04568) & (0.02899) & (0.01705) \\
$\hat{\boldsymbol{\gamma}}_1(w^*)$ & 0.10355 & 0.03784 & 0.01839 & 0.00889 & \textbf{0.00360} \\
$w^*$ & (0.08453) & (0.05606) & (0.04310) & (0.03374) & (0.02475) \\ \hline 
\end{tabular}
\vspace{2.5truemm}

(b) Results when $\lambda_1 - \lambda_2 = 1/n^{1/2}$.\\
\begin{tabular}{cccccc}\hline
$n$ & 20 & 50 & 100 & 200 & 500 \\ \hline
$\hat{\boldsymbol{\gamma}}_1^{(T)}$& 0.12632 & 0.04462 & 0.02185 & 0.01079 & 0.00433 \\
$\hat{\boldsymbol{\gamma}}_1^{(E)}$& 1.33317 & 1.33026 & 1.31682 & 1.32444 & 1.33833 \\
$\hat{\boldsymbol{\gamma}}_1^{(R)}$& 0.10748 & 0.03816 & 0.01868 & 0.00932 & \textbf{0.00363} \\
$\hat{\boldsymbol{\gamma}}_1(0.1)$ & \textbf{0.10697} & \textbf{0.03806} & \textbf{0.01864} & \textbf{0.00929} & \textbf{0.00363} \\
$\hat{\boldsymbol{\gamma}}_1(0.2)$ & 0.10707 & 0.03822 & 0.01872 & 0.00932 & 0.00367 \\ 
$\hat{\boldsymbol{\gamma}}_1(0.3)$ & 0.10854 & 0.03889 & 0.01906 & 0.00948 & 0.00375 \\
$\hat{\boldsymbol{\gamma}}_1(0.4)$ & 0.11343 & 0.04060 & 0.01991 & 0.00987 & 0.00393 \\
$\hat{\boldsymbol{\gamma}}_1(0.6)$ & 0.15839 & 0.05433 & 0.02643 & 0.01296 & 0.00522 \\
$\hat{\boldsymbol{\gamma}}_1(\hat{w}^*)$ & 0.10808 & 0.03809 & 0.01865 & 0.00931 & \textbf{0.00363} \\
$\hat{w}^*$ & (0.17153) & (0.07229) & (0.04350) & (0.02641) & (0.01566) \\
$\hat{\boldsymbol{\gamma}}_1(w^*)$ & 0.10714 & 0.03812 & 0.01866 & 0.00931 & \textbf{0.00363} \\
$w^*$ & (0.05375) & (0.03020) & (0.02054) & (0.01427) & (0.00895) \\ \hline
\end{tabular}
\vspace{2.5truemm}

(c) Results when $\lambda_1 - \lambda_2 = 1/n$.\\
\begin{tabular}{cccccc}\hline
$n$ & 20 & 50 & 100 & 200 & 500 \\ \hline
$\hat{\boldsymbol{\gamma}}_1^{(T)}$& 0.12627 & 0.04431 & 0.02267 & 0.01078 & 0.00442 \\ 
$\hat{\boldsymbol{\gamma}}_1^{(E)}$& 1.44891 & 1.47012 & 1.46049 & 1.47838 & 1.48953 \\ 
$\hat{\boldsymbol{\gamma}}_1^{(R)}$& 0.10464 & \textbf{0.03730} & \textbf{0.01857} & \textbf{0.00905} & \textbf{0.00365} \\ 
$\hat{\boldsymbol{\gamma}}_1(0.1)$ & \textbf{0.10456} & 0.03733 & 0.01865 & 0.00906 & 0.00366 \\
$\hat{\boldsymbol{\gamma}}_1(0.2)$ & 0.10508 & 0.03762 & 0.01888 & 0.00914 & 0.00371 \\ 
$\hat{\boldsymbol{\gamma}}_1(0.3)$ & 0.10699 & 0.03841 & 0.01939 & 0.00934 & 0.00380 \\ 
$\hat{\boldsymbol{\gamma}}_1(0.4)$ & 0.11307 & 0.04022 & 0.02044 & 0.00979 & 0.00400 \\ 
$\hat{\boldsymbol{\gamma}}_1(0.6)$ & 0.16196 & 0.05414 & 0.02773 & 0.01306 & 0.00536 \\ 
$\hat{\boldsymbol{\gamma}}_1(\hat{w}^*)$ & 0.10721 & 0.03733 & 0.01860 & \textbf{0.00905} & \textbf{0.00365} \\ 
$\hat{w}^*$ & (0.17238) & (0.07300) & (0.04313) & (0.02688) & (0.01564) \\ 
$\hat{\boldsymbol{\gamma}}_1(w^*)$ & 0.10461 & \textbf{0.03730} & \textbf{0.01857} & \textbf{0.00905} & \textbf{0.00365} \\ 
$w^*$ & (0.01271) & (0.00440) & (0.00210) & (0.00102) & (0.00040) \\ \hline 
\end{tabular}

\vspace{2.5truemm}
The best result up to significant digits in each setting is in bold face.
\end{center}
\end{table}

We see from Table~\ref{tab:01} that $\hat{\boldsymbol{\gamma}}_1(0.3)$ is the best for all cases and that $\hat{\boldsymbol{\gamma}}_1(\hat{w}^*)$ and $\hat{\boldsymbol{\gamma}}_1(w^*)$ are better than both $\hat{\boldsymbol{\gamma}}_1^{(R)}$ and $\hat{\boldsymbol{\gamma}}_1^{(T)}$. 
These results suggest that the optimal value of $w$, which is unknown in practice, is in $(0,1/2)$ and motivate to use the proposed estimator $\hat{\boldsymbol{\gamma}}_1(\hat{w}^*)$.
As previously anticipated, $\hat{\boldsymbol{\gamma}}_1^{(E)}$ is not better than other estimators.

We see from Table~\ref{tab:02} that $\hat{\boldsymbol{\gamma}}_1(0.1)$ or $\hat{\boldsymbol{\gamma}}_1(0.2)$ is the best when $\eta=1/3$ and that $\hat{\boldsymbol{\gamma}}_1(0.1)$ or $\hat{\boldsymbol{\gamma}}_1^{(R)}$ or $\hat{\boldsymbol{\gamma}}_1(w^*)$ or $\hat{\boldsymbol{\gamma}}_1(\hat{w}^*)$ is the best when $\eta=1/2, 1$.
When $\hat{\boldsymbol{\gamma}}^{(E)}_1$ is consistent $(\eta < 1/2)$, using small but nonzero $w$ tends to improve $\hat{\boldsymbol{\gamma}}_1^{(R)}$ and $\hat{\boldsymbol{\gamma}}_1^{(T)}$.
Particularly, $\hat{\boldsymbol{\gamma}}_1(w^*)$ is slightly better than $\hat{\boldsymbol{\gamma}}_1^{(R)}$.
As for $\hat{\boldsymbol{\gamma}}_1(\hat{w}^*)$, it is not good as $\hat{\boldsymbol{\gamma}}_1(w^*)$ in many cases, but their difference goes to $0$ as $n$ increases. 
In contrast, $\hat{\boldsymbol{\gamma}}_1^{(E)}$ is quite bad especially when $\eta \geq 1/2$, which may be due to the fact from Theorem~\ref{thm3} that $\hat{\boldsymbol{\gamma}}_1^{(E)}$ does not necessarily satisfy \eqref{conde}.
The inconsistency of $\hat{\boldsymbol{\gamma}}_1^{(E)}$ may affect the performance of $\hat{\boldsymbol{\gamma}}_1^{(T)}$, which is worse than $\hat{\boldsymbol{\gamma}}_1^{(R)}$, $\hat{\boldsymbol{\gamma}}_1(\hat{w}^*)$, and $\hat{\boldsymbol{\gamma}}_1(w^*)$. 

Next, we consider large $p$, large $n$ regimes. 
Let $p=20,50,100,200,500$, $q=5$, and $n=\lfloor p^{0.8} \rfloor$, where $\lfloor \cdot \rfloor$ is the floor function.
Let $\boldsymbol{X}$ be a random sample of size $n$ drawn from $\mathcal{N}_q (\boldsymbol{0}_q,\boldsymbol{I}_q)$ with centralization. 
Let $\boldsymbol{\alpha}=\boldsymbol{1}_q$ and $\boldsymbol{\Sigma} = \boldsymbol{\Gamma}\boldsymbol{\Lambda}\boldsymbol{\Gamma}'$, where $\boldsymbol{\Lambda} = {\rm diag} (\lambda_1,\lambda_2,\dots,\lambda_p)$ with $(\lambda_1,\lambda_2) = (p^{0.25},1) , (p^{0.8}, p^{0.4})$, $\lambda_i=1, \ i=3,4,\dots,p$ and $\boldsymbol{\Gamma}=(\boldsymbol{\gamma}_1,\boldsymbol{\gamma}_2,\dots,\boldsymbol{\gamma}_p)$ is set by using normalized eigenvectors of the sample covariance matrix of the random sample of size $2p$ drawn from $\mathcal{N}_p (\boldsymbol{0}_p,\boldsymbol{I}_p)$. 
Table~\ref{tab:03} shows the results of mean squared errors of the estimators based on 1000 numerical simulations.

\begin{table}[tbp]
\begin{center}
\caption{The values of the mean squared errors (with the average of $\hat{w}^*$ and $w^*$) in the cases under large $p$, large $n$ regimes $n = \lfloor p^{0.8} \rfloor$.}
\label{tab:03}

(a) Results when $\lambda_1=p^{0.25}$, $\lambda_2=1$.\\
\begin{tabular}{cccccc}\hline
$p$ & 20 & 50 & 100 & 200 & 500 \\ \hline
$\hat{\boldsymbol{\gamma}}_1^{(T)}$& 0.10647 & 0.07566 & 0.06196 & 0.04910 & 0.03607 \\
$\hat{\boldsymbol{\gamma}}_1^{(E)}$& 0.91101 & 0.43534 & 0.24993 & 0.16127 & 0.09310 \\
$\hat{\boldsymbol{\gamma}}_1^{(R)}$& 0.10856 & 0.08709 & 0.07684 & 0.06605 & 0.05566 \\
$\hat{\boldsymbol{\gamma}}_1(0.1)$ & 0.10488 & 0.08214 & 0.07113 & 0.05983 & 0.04856 \\
$\hat{\boldsymbol{\gamma}}_1(0.2)$ & 0.10163 & 0.07760 & 0.06598 & 0.05436 & 0.04260 \\
$\hat{\boldsymbol{\gamma}}_1(0.3)$ & \textbf{0.09955} & 0.07409 & 0.06199 & 0.05017 & 0.03820 \\
$\hat{\boldsymbol{\gamma}}_1(0.4)$ & 0.10013 & \textbf{0.07276} & \textbf{0.06014} & \textbf{0.04806} & \textbf{0.03584} \\
$\hat{\boldsymbol{\gamma}}_1(0.6)$ & 0.12513 & 0.08655 & 0.06985 & 0.05472 & 0.03944 \\
$\hat{\boldsymbol{\gamma}}_1(\hat{w}^*)$ & 0.10485 & 0.08514 & 0.07521 & 0.06475 & 0.05472 \\
$\hat{w}^*$ & (0.08214) & (0.03544) & (0.02728) & (0.02007) & (0.01245) \\
$\hat{\boldsymbol{\gamma}}_1(w^*)$ & 0.10402 & 0.08336 & 0.07400 & 0.06406 & 0.05440 \\
$w^*$ & (0.12226) & (0.07454) & (0.04898) & (0.03123) & (0.01675) \\ \hline
\end{tabular}
\vspace{2.5truemm}

(b) Results when $\lambda_1=p^{0.8}$, $\lambda_2=p^{0.4}$.\\
\begin{tabular}{cccccc}\hline
$p$ & 20 & 50 & 100 & 200 & 500 \\ \hline
$\hat{\boldsymbol{\gamma}}_1^{(T)}$& \textbf{0.21682} & \textbf{0.10300} & \textbf{0.06570} & \textbf{0.04191} & \textbf{0.02412} \\
$\hat{\boldsymbol{\gamma}}_1^{(E)}$& 0.63574 & 0.16489 & 0.08581 & 0.04842 & 0.02587 \\
$\hat{\boldsymbol{\gamma}}_1^{(R)}$& 0.32229 & 0.29513 & 0.30129 & 0.32524 & 0.35335 \\
$\hat{\boldsymbol{\gamma}}_1(0.1)$ & 0.29480 & 0.21403 & 0.16516 & 0.11736 & 0.06643 \\
$\hat{\boldsymbol{\gamma}}_1(0.2)$ & 0.26881 & 0.15606 & 0.10299 & 0.06423 & 0.03371 \\
$\hat{\boldsymbol{\gamma}}_1(0.3)$ & 0.24459 & 0.12276 & 0.07761 & 0.04807 & 0.02641 \\
$\hat{\boldsymbol{\gamma}}_1(0.4)$ & 0.22383 & 0.10712 & 0.06789 & 0.04296 & 0.02447 \\
$\hat{\boldsymbol{\gamma}}_1(0.6)$ & 0.23027 & 0.10690 & 0.06735 & 0.04251 & 0.02430 \\
$\hat{\boldsymbol{\gamma}}_1(\hat{w}^*)$ & 0.22046 & 0.10730 & 0.06790 & 0.04286 & 0.02451 \\
$\hat{w}^*$ & (0.43745) & (0.43157) & (0.43595) & (0.44012) & (0.43357) \\
$\hat{\boldsymbol{\gamma}}_1(w^*)$ & 0.21935 & 0.10457 & 0.06662 & 0.04241 & 0.02433 \\
$w^*$ & (0.43548) & (0.43582) & (0.43235) & (0.42782) & (0.42058) \\ \hline
\end{tabular}

\vspace{2.5truemm}
The best result up to significant digits in each setting is in bold face.
\end{center}
\end{table}

As for the two cases in Table~\ref{tab:03}, features of the results are different.
We see from Table~\ref{tab:03}-(a) that $\hat{\boldsymbol{\gamma}}_1(0.3)$ is the best for $p=20$ and $\hat{\boldsymbol{\gamma}}_1(0.4)$ is the best for the other cases. 
Performances of $\hat{\boldsymbol{\gamma}}_1(\hat{w}^*)$ and $\hat{\boldsymbol{\gamma}}_1(w^*)$ are not good but better than $\hat{\boldsymbol{\gamma}}_1^{(R)}$.
This result may be because the upper bound in \eqref{UP} is not sharp in high-dimensional and weakly spiked (or non-asymptotically spiked) eigenvalue situations.
We see from Table~\ref{tab:03}-(b) that $\hat{\boldsymbol{\gamma}}_1^{(T)}$ is the best for all cases. 
Although $\hat{\boldsymbol{\gamma}}_1(\hat{w}^*)$ and $\hat{\boldsymbol{\gamma}}_1(w^*)$ are not the best, their performances become closer to $\hat{\boldsymbol{\gamma}}_1^{(T)}$ as $p$ and $n$ increase. 
As for $\hat{\boldsymbol{\gamma}}_1^{(E)}$, its performance is bad for small $p$ and rapidly closer to $\hat{\boldsymbol{\gamma}}_1^{(T)}$ as $p$ and $n$ increase, which may be due to the fact from Theorem~\ref{thm5} that 
$\hat{\boldsymbol{\gamma}}_1^{(E)}$ is consistent because $\delta = 0.8$ and $\beta = 0.8$ satisfy $\delta > 2- 2\beta$. 
In contrast, $\hat{\boldsymbol{\gamma}}_1^{(R)}$ is bad especially for larger $p$, which may be due to the fact from Theorem~\ref{thm5} that $\hat{\boldsymbol{\gamma}}_1^{(R)}$ is not necessarily consistent because $\delta = 0.8$ does not satisfy $\delta > 1$. 


\section{Real data example}\label{sec:8}
In this section, we apply the conventional methods and the proposed method to human dimensions data of \cite{RefKM05}.
In the dataset, there are 521 individuals consisting of 267 males and 254 females with 251 dimensions (variables) in common. 
We select $p=10$ dimensions associated with feet as response variables, and $q=5,10$ dimensions associated with hands as explanatory variables; see Table~\ref{tab:2}.
The cases $q=2$-A and $q=2$-B will be explained later.
In the following, we use the measurements of $n=66$ males of 19--22 years old that have no missing data on the selected dimensions. 

\begin{table}[tbp]
\begin{center}
\begin{footnotesize}
\caption{
The dimensions selected in real data example. \\
}
\label{tab:2}

(a) Response variables.\\
\begin{tabular}{l}\hline
Bimalleolar breadth \\
Medial malleolus height \\
Sphyrion height \\
Lateral malleolus height \\
Sphyrion fibulare height \\
Dorsal arch height \\
Ball height \\
Outside ball height \\
Great toe tip height \\
Foot circumference \\
\hline
\end{tabular}
\vspace{2.5truemm}

(b) Explanatory variables.
The sign ``$\circ$'' denotes the indicated variable is used. \\
\begin{tabular}{lcccc}\hline
& $q=5$ & $q=10$ & $q=2$-A & $q=2$-B \\ \hline
Hand length, tip to wrist crease & $\circ$ & $\circ$ & $\circ$ & -- \\
Palm length & $\circ$ & $\circ$ & -- & -- \\
Thumb length & $\circ$ & $\circ$ & -- &  $\circ$ \\
Index finger length & $\circ$ & $\circ$ & -- &  $\circ$ \\
Middle finger length & -- & $\circ$ & -- & -- \\
2nd and 3rd phalanx length, middle finger & -- & $\circ$ & -- & -- \\
Middle finger length, dorsal & -- & $\circ$ & -- & -- \\
Bicondylar humerus & -- & $\circ$ & -- & -- \\
Bistyloid breadth & -- & $\circ$ & -- & -- \\
Hand breadth, diagonal & $\circ$ & $\circ$ & $\circ$ & -- \\
\hline
\end{tabular}

\end{footnotesize}
\end{center}
\end{table}

\begin{remark}
In the case $q=10$, there is an approximate linear relationship
\[
\mbox{Hand length, tip to wrist crease}
\approx
\mbox{Palm length} + \mbox{Middle finger length},
\]
where the correlation coefficient between the left-hand side and the right-hand side is about 0.96.
This relationship does not mess up the result because the methods to estimate $\boldsymbol{\gamma}_1$ discussed in this paper are based on the projections $\boldsymbol{X} (\boldsymbol{X}' \boldsymbol{X})^{-1} \boldsymbol{X}' \boldsymbol{Y}$ and $(\boldsymbol{I}_n - n^{-1} \boldsymbol{1}_n \boldsymbol{1}_n') \boldsymbol{Y}$ of $\boldsymbol{Y}$.
\end{remark}

Estimates of $\boldsymbol{\gamma}_1$ is provided in Table~\ref{tab:3}.
Note that the values of $\hat{\boldsymbol{\gamma}_1}^{(T)}$ are the same in the two tables because explanatory variables are not used.
The all coefficients in the all estimates calculated here are positive, which suggest that the first principal component represents a size factor.
The contribution ratios of the first two eigenvalues are $0.393, 0.264$ for $q=5$ and $0.405, 0.262$ for $q=10$, so the identifiability of the first principal eigenvector does not seem weak.
The values of $\hat{w}^*$ are $0.230$ for the case $q=5$ and $0.256$ for the case $q=10$, hence the value of $\hat{\boldsymbol{\gamma}}_1(\hat{w}^*)$ is different from the values of $\hat{\boldsymbol{\gamma}}_1^{(R)}$, $\hat{\boldsymbol{\gamma}}_1^{(E)}$, and $\hat{\boldsymbol{\gamma}}_1^{(T)}$.

\begin{table}[tbp]
\begin{center}
\begin{footnotesize}
\caption{
Estimates for $\gamma_1$ in the real data example. \\
}
\label{tab:3}

(a) Results when $q=5$.
The value of $\hat{w}^*$ is 0.230.\\
\begin{tabular}{c|c|c|c|c|c|c|c|c}\hline
$\hat{\boldsymbol{\gamma}}_1^{(T)}$ & $\hat{\boldsymbol{\gamma}}_1^{(E)}$ & 
$\hat{\boldsymbol{\gamma}}_1^{(R)}$ & $\hat{\boldsymbol{\gamma}}_1(0.1)$ & $\hat{\boldsymbol{\gamma}}_1(0.2)$ & 
$\hat{\boldsymbol{\gamma}}_1(0.3)$ & $\hat{\boldsymbol{\gamma}}_1(0.4)$ & $\hat{\boldsymbol{\gamma}}_1(0.6)$ & 
$\hat{\boldsymbol{\gamma}}_1(\hat{w}^*)$  \\ \hline
0.181& 	0.159& 	0.188& 	0.187& 	0.186& 	0.184& 	0.183& 	0.178& 	0.185\\
0.329& 	0.371& 	0.319& 	0.321& 	0.322& 	0.324& 	0.326& 	0.333& 	0.323\\
0.305& 	0.371& 	0.288& 	0.290& 	0.293& 	0.296& 	0.300& 	0.312& 	0.294\\
0.245& 	0.209& 	0.247& 	0.248& 	0.247& 	0.247& 	0.247& 	0.244& 	0.247\\
0.165& 	0.044& 	0.206& 	0.201& 	0.194& 	0.186& 	0.176& 	0.151& 	0.191\\
0.202& 	0.336& 	0.150& 	0.157& 	0.166& 	0.176& 	0.188& 	0.218& 	0.169\\
0.076& 	0.101& 	0.065& 	0.066& 	0.068& 	0.070& 	0.073& 	0.079& 	0.069\\
0.034& 	0.028& 	0.036& 	0.036& 	0.035& 	0.035& 	0.035& 	0.033& 	0.035\\
0.058& 	0.072& 	0.052& 	0.053& 	0.054& 	0.055& 	0.056& 	0.059& 	0.054\\
0.792& 	0.725& 	0.803& 	0.802& 	0.801& 	0.799& 	0.796& 	0.787& 	0.800\\ \hline
\end{tabular}
\vspace{2.5truemm}

(b) Results when $q=10$.
The value of $\hat{w}^*$ is 0.256.\\
\begin{tabular}{c|c|c|c|c|c|c|c|c}\hline
$\hat{\boldsymbol{\gamma}}_1^{(T)}$ & $\hat{\boldsymbol{\gamma}}_1^{(E)}$ & 
$\hat{\boldsymbol{\gamma}}_1^{(R)}$ & $\hat{\boldsymbol{\gamma}}_1(0.1)$ & $\hat{\boldsymbol{\gamma}}_1(0.2)$ & 
$\hat{\boldsymbol{\gamma}}_1(0.3)$ & $\hat{\boldsymbol{\gamma}}_1(0.4)$ & $\hat{\boldsymbol{\gamma}}_1(0.6)$ & 
$\hat{\boldsymbol{\gamma}}_1(\hat{w}^*)$ \\ \hline
0.181& 	0.150& 	0.191& 	0.189& 	0.188& 	0.186& 	0.183& 	0.177& 	0.187\\
0.329& 	0.393& 	0.312& 	0.315& 	0.317& 	0.320& 	0.324& 	0.335& 	0.319\\
0.305& 	0.394& 	0.283& 	0.286& 	0.289& 	0.293& 	0.299& 	0.314& 	0.291\\
0.245& 	0.215& 	0.247& 	0.247& 	0.247& 	0.247& 	0.246& 	0.244& 	0.247\\
0.165& 	0.051& 	0.204& 	0.199& 	0.192& 	0.185& 	0.176& 	0.152& 	0.188\\
0.202& 	0.335& 	0.153& 	0.160& 	0.168& 	0.177& 	0.188& 	0.218& 	0.173\\
0.076& 	0.096& 	0.067& 	0.068& 	0.070& 	0.071& 	0.073& 	0.078& 	0.071\\
0.034& 	0.020& 	0.040& 	0.039& 	0.038& 	0.037& 	0.036& 	0.032& 	0.037\\
0.058& 	0.070& 	0.053& 	0.054& 	0.054& 	0.055& 	0.056& 	0.059& 	0.055\\
0.792& 	0.701& 	0.807& 	0.805& 	0.803& 	0.800& 	0.797& 	0.786& 	0.802\\ \hline
\end{tabular}
\end{footnotesize}
\end{center}
\end{table}

In addition, we conducted the cross-validation in order to compare the proposed estimator $\hat{\boldsymbol{\gamma}}_1(\hat{w}^*)$ and conventional estimators from the viewpoint of prediction, which is not directly discussed in this paper.
The prediction based on ordinary least squares (OLS) estimator is also considered for reference.
Particularly, we calculate the values of the cross-validated mean squared prediction error
\[
\frac{1}{66} \sum_{i=1}^{66} \| \boldsymbol{y}_i - \hat{\boldsymbol{y}}_i \|^2 ,
\]
where $\hat{\boldsymbol{y}}_i $ is a predictor for $\boldsymbol{y}_i$, the $i$-th row of $\boldsymbol{Y}$, constructed from the sample without $\boldsymbol{y}_i$ for $i=1,\ldots,66$.
The result is provided in Table~\ref{tab:4}.
This table shows that from the viewpoint of prediction, $\hat{\boldsymbol{\gamma}}_1(\hat{w}^*)$ is slightly better than the conventional estimators for the data analyzed here, while the result depends on data.
Finally, two additional results with $q=2$ explanatory variables are provided; see Table~\ref{tab:2}-(b) for the dimensions selected.
The cases $q=2$-A and $q=2$-B are results when good and bad sets of explanatory variables are considered, respectively.
The values of $\hat{w}^*$ are $0.221$ and $0.468$, respectively, which implicitly suggest that the regression relatively works for the case $q=2$-A and does not work for $q=2$-B.
As for the case $q=2$-A, $\hat{\boldsymbol{\gamma}}_1(\hat{w}^*)$ is also better than $\hat{\boldsymbol{\gamma}}_1^{(T)}$, $\hat{\boldsymbol{\gamma}}_1^{(E)}$, and $\hat{\boldsymbol{\gamma}}_1^{(R)}$, and OLS is the best.
This result seems natural because only good explanatory variables are used, so variances becomes smaller.
As for the case $q=2$-B, $\hat{\boldsymbol{\gamma}}_1^{(E)}$ is the best, $\hat{\boldsymbol{\gamma}}_1(\hat{w}^*)$ is better than $\hat{\boldsymbol{\gamma}}_1^{(R)}$, and $\hat{\boldsymbol{\gamma}}_1^{(R)}$ and OLS are bad.

\begin{table}[tbp]
\begin{center}
\caption{The values of the cross-validated mean squared prediction error.}
\label{tab:4}

\begin{tabular}{ccccc}\hline
$q$ & 5 & 10 & 2-A & 2-B \\ \hline
$\hat{\boldsymbol{\gamma}}_1^{(T)}$& 189.288 & 200.642 & 188.771 & 272.478 \\ 
$\hat{\boldsymbol{\gamma}}_1^{(E)}$& 196.159 & 206.911 & 198.906 & \textbf{271.936} \\
$\hat{\boldsymbol{\gamma}}_1^{(R)}$& 189.282 & 200.815 & 188.542 & 274.767 \\
$\hat{\boldsymbol{\gamma}}_1(0.1)$ & 189.197 & 200.714 & 188.460 & 273.879 \\
$\hat{\boldsymbol{\gamma}}_1(0.2)$ & 189.136 & 200.629 & 188.416 & 273.324 \\
$\hat{\boldsymbol{\gamma}}_1(0.3)$ & 189.112 & 200.573 & 188.428 & 272.949 \\
$\hat{\boldsymbol{\gamma}}_1(0.4)$ & 189.150 & \textbf{200.565} & 188.529 & 272.680 \\
$\hat{\boldsymbol{\gamma}}_1(0.6)$ & 189.589 & 200.861 & 189.235 & 272.321 \\
$\hat{\boldsymbol{\gamma}}_1(\hat{w}^*)$ & \textbf{189.111} & 200.573 & 188.411 & 272.510 \\
OLS & 189.991 & 214.441 & \textbf{186.147} & 274.463 \\ \hline
\end{tabular}

\vspace{2.5truemm}
The smallest value up to significant digits in each setting is in bold face.
\end{center}
\end{table}

\section{Concluding remarks}\label{sec:9}
In the multivariate allometric regression model \eqref{MAR}, we considered the problem of estimating $\boldsymbol{\gamma}_1$, a key quantity for inference of this model.
Specifically, we considered a class of estimators $\hat{\boldsymbol{\gamma}}_1(w)$ $(0 \leq w \leq 1)$ based on the weighted sum-of-squares matrices $\boldsymbol{S}_R$ and $\boldsymbol{S}_E$, established a non-asymptotic upper bound of their mean squared errors, derived the value $w^*$ that attains the minimum of the upper bound with respect to $w$, proposed its estimator $\hat{w}^*$, and discussed sufficient conditions for the consistency up to sign of  $\hat{\boldsymbol{\gamma}}_1(w)$ $(0 \leq w \leq 1)$, $\hat{\boldsymbol{\gamma}}_1(w^*)$, and $\hat{\boldsymbol{\gamma}}_1(\hat{w}^*)$.
Based on the result of numerical simulations and a real data example, we think that $\hat{\boldsymbol{\gamma}}_1(\hat{w}^*)$ and $\hat{\gamma}_1^{(T)} = \hat{\boldsymbol{\gamma}}_1(0.5)$ are good from the viewpoint of estimation of $\boldsymbol{\gamma}_1$ and prediction when regression works.
Due to a weight $w$ other than $1/2$,  $\hat{\boldsymbol{\gamma}}_1(\hat{w}^*)$ is more flexible than $\hat{\gamma}_1^{(T)}$, which seems a favorable property because $w=1/2$ imposes a heavy weight on $\boldsymbol{S}_E$ when $\| \boldsymbol{X} \boldsymbol{\alpha} \|^2$ is not small and when $\lambda_1 - \lambda_2$ is small, while $\hat{\boldsymbol{\gamma}}_1(\hat{w}^*)$ does not work well when $\lambda_1-\lambda_2$ is large but $\lambda_1/{\rm tr}(\boldsymbol{\Sigma})$ is small.

\section*{Acknowledgments}
This study was supported in part by Japan Society for the Promotion of Science KAKENHI Grant Numbers 20K11713 and 21K13836.
The authors would like to thank National Institute of Advanced Industrial Science and Technology (AIST) for providing the data used in Section~\ref{sec:8}.

\end{document}